\newtheorem{theorem}{Theorem}
\newtheorem*{Kakutani}{Kakutani fixed point theorem}
\newtheorem*{Alaoglu}{Banach-Alaoglu theorem}{\bf}{\it}
\newtheorem{lemma}{Lemma}%[section]
\newtheorem{definition}{Definition}%[section]
\newcommand{\R}{\mathbf R}
\newcommand\E{\mathbf E}
\newcommand{\f}{f : X\to\R^n}
\newcommand{\g}{g : X\to\R^{m}}
\newcommand{\norm}[1]{\lVert#1\rVert}
\newcommand{\lhad}[2]{#2^{[#1]}_{-} }
\newcommand{\lsubd}[2]{\partial^{(#1)}_{-} #2}
\newcommand{\ldini}[1]{#1^{(1)}_{-} }
\newcommand{\be}{\begin{equation}}
\newcommand{\ee}{\end{equation}}
\newcommand\pr{\prime}
\newcommand\dom{\rm dom}
\title{First and Second Order Necessary and Sufficient Optimality Conditions of Fritz John Type for Vector Problems over Cones} 
\author{Vsevolod I. Ivanov\thanks{Email: vsevolodivanov@yahoo.com
\vspace{6pt} }
\\\vspace{6pt}{\em{\small Department of Mathematics, Technical University of Varna, 9010 Varna, Bulgaria} }
}
\date{\today}
\begin{document}
\maketitle

\begin{abstract}

\smallskip

In this paper, we obtain a new proof of Fritz John necessary optimality conditions for vector problems applying Kakutani fixed point theorem and Hadamard directional derivative. We also derive a similar proof of  second-order Fritz John necessary optimality conditions. Sufficient conditions for weak global efficiency with generalized convex functions and local efficiency are provided.
 
2010 {\it Mathematics Subject Classification.}
90C26, 90C25, 90C29, 26B25

\smallskip  
{\it Key words and phrases.}
optimality conditions, vector problems, cone optimization, fixed point theorems

\end{abstract}

%\begin{abstract}

%\smallskip
%2010 {\it Mathematics Subject Classification.}

%\smallskip  
%{\it Key words and phrases.}

%\end{abstract}

\section{Introduction}
In the optimality conditions for the problems with inequality or equality constraints or both appear Lagrange multipliers.
Optimality conditions of Fritz John type play important role in optimization. After appearing  Fritz John \cite{FJ48} and Karush-Kuhn-Tucker \cite{kar39,kt51} necessary conditions optimization separated from mathematical analysis as a relatively self-dependent science. Various proofs were published after the initial one by Fritz John 
\cite{arr61,bel69,bir07,bre09,bre12,can66,Cottle,DaCunha,hal74,mcshane,MF67,McCormic,por80,sla50}.
Such conditions are usually proved by showing the inconsistency of a system with inequalities and equations (primal conditions), by separations of sets, by theorems of the alternatives, by duality and so on.

In this paper, we obtain necessary and sufficient Fritz John  optimality conditions  for weak minimum for the following vector optimization problem, denoted by (P):

\medskip
Minimize $f(x)$ with respect to the cone $C$ 

subject to $g(x)\in -K$,
\medskip

\noindent
where $C\subset\R^n$, $K\subset\R^m$ are some cones. This problem includes as a particular case the problem with inequality and equality constraints.
We propose another proof, which is based on Kakutani's fixed point theorem. It is quite different from the proofs in 
\cite{can66,DaCunha,hal74}, where Brouwer's fixed point theorem were applied. Halkin's proof \cite{hal74} uses implicit functions, and Canon, Cullum, Polak's \cite{can66} proof uses separation of a ray and a cone. Brouwer's theorem were used to prove this separation. Similar arguments were applied by Da Cunha, Polak \cite{DaCunha}, but they treated vector problems. Really, Kakutani's fixed point theorem concern set-valued maps and it is a generalization of  Brouwer's fixed point theorem. On the other hand, the published proofs, where  Brouwer's  theorem were applied are complicated, but our proof is very simple.
We derive second-order necessary conditions for weak local minimum applying again Kakutani's fixed point theorem. In these two theorems, we consider vector problems such that minimization is with respect some cone, eventually different from positive orthant. The constraints are connected with another cone.

We also obtain  sufficient global optimality conditions for the problem (P) with generalized convex functions. In the first-order ones the objective function is pseudoconvex and the constraint function is strictly pseudoconvex. Pseudoconvex functions were introduced for optimization problems in \cite{man65}. We extend  this notion to minimization problems with respect to a cone. In the second-order conditions,  the objective function is second-order pseudoconvex and the constraint function is second-order strictly pseudoconvex. Second-order pseudoconvex functions were introduced in \cite{jmaa2008}. We generalize this notion to minimization problems with respect to a cone.
      
At last, we derive first and second-order sufficient optimality conditions for isolated weak local minimum. In these conditions, the nonstrict inequalities are replaced by strict ones.

\section{Fritz John theorem for vector problems over cones}
In this section, we obtain first-order necessary conditions for weak local minimum of Fritz John type.

Consider the vector optimization problem (P),
%
%\medskip
%Minimize $f(x)$ with respect to some cone $C$ 
%
%subject to $g(x)\in -K$,
%\medskip
%
%\noindent
where $\f$ and $\g$ are given  vector functions defined on some open set $X\subset\R^s$. The cones $C\in\R^n$ and $K\in\R^{m}$ are given closed convex pointed ones, whose vertexes are the origins of the respective spaces.

Denote by $S$ the feasible set, that is
\[
S:=\{x\in X\mid g(x)\in -K\}.
\]

\begin{definition}[\cite{jahn}]
A feasible point $\bar x$ is called a weak local minimizer, iff there exists a neighborhood $N\ni\bar x$ such that there is no another feasible point $x\in S\cap N$ with the property $f(x)\in f(\bar x)-{\rm int}(C)$.
\end{definition}

Let $C$ be a poined cone with nonempty interior ${\rm int}(C)$, whose vertex is the origin. Denote its positive polar cone by $C^*$ 
\[
C^*:=\{\lambda\in R^n\mid\lambda\cdot x\ge 0\textrm{ for all }x\in C\},
\]

\begin{definition}[\cite{DemRub}]
The lower Dini directional derivative of a given function $f:\E\to\R\cup\{+\infty\}$ at the point $x\in\dom$ $f$ in direction $u\in\E$ is defined as follows:
\[
\ldini f  (x;u)=\liminf_{t\downarrow 0}\,t^{-1}[f(x+t u)-f(x)].
\]
\end{definition}

\begin{definition}[\cite{DemRub}]
The lower Hadamard directional derivative of a given function $f:\E\to\R\cup\{+\infty\}$ at the point $x\in\dom$ $f$ in direction $u\in\E$ is defined as follows:
\[
\lhad 1 f (x;u)=\liminf_{t\downarrow 0,u^\pr\to u}\,t^{-1}[f(x+t u^\pr)-f(x)].
\]
\end{definition}

Suppose that $\bar x\in S$ is a weak local minimizer. Then, consider the function 
\[
F(x):=\sup\{\lambda\cdot[f(x)-f(\bar x)]+\mu\cdot g(x)\mid (\lambda,\mu)\in\Lambda\},
\]
where $\Lambda:=\{(\lambda,\mu)\mid\lambda\in C^*,\;\mu\in K^*,\;\sum_{i=1}^n|\lambda_i|+\sum_{j=1}^m|\mu_j|=1\}$.

\begin{lemma}[\cite{na2015}]\label{na.F}
Suppose that $\bar x\in S$ is a weak local minimizer. Then, there exists a neighborhood $N\ni\bar x$ such that $F(x)\geq F(\bar x)=0$ for all $x\in N$.
\end{lemma}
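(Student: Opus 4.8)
The plan is to prove the two assertions of the lemma separately: the equality $F(\bar x)=0$ follows by a one-line computation, while the inequality $F(x)\ge 0$ near $\bar x$ is obtained by contradiction, by reading off from $F(x)<0$ both that $x$ is feasible and that $f(x)\in f(\bar x)-{\rm int}(C)$, which is impossible at a weak local minimizer.

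First I would evaluate $F$ at $\bar x$. For $(\lambda,\mu)\in\Lambda$ the inner expression at $\bar x$ collapses to $\mu\cdot g(\bar x)$; since $\bar x\in S$ we have $-g(\bar x)\in K$, and $\mu\in K^*$ forces $\mu\cdot g(\bar x)\le 0$, so $F(\bar x)\le 0$. For the reverse inequality note that, $C$ being a pointed closed convex cone, $C^*\neq\{0\}$, so one can pick $\lambda_0\in C^*$ with $\sum_i|\lambda_{0,i}|=1$; the pair $(\lambda_0,0)$ lies in $\Lambda$ and gives inner value $0$, hence $F(\bar x)\ge 0$. Thus $F(\bar x)=0$; incidentally this shows $\Lambda\neq\emptyset$, so $F$ is well defined.

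Next, take $N\ni\bar x$ to be a neighborhood as in the definition of weak local minimizer, shrunk if necessary so that $N\subset X$. I claim $F(x)\ge 0$ for all $x\in N$. Suppose not, say $F(x)<0$ for some $x\in N$. Restricting the supremum defining $F(x)$ to pairs $(0,\mu)$ with $\mu\in K^*$ and $\sum_j|\mu_j|=1$ gives $\mu\cdot g(x)\le F(x)<0$ for all such $\mu$, hence $\mu\cdot g(x)\le 0$ for every $\mu\in K^*$; by the bipolar theorem $K^{**}=K$ this means $-g(x)\in K$, i.e. $x\in S$. Restricting instead to pairs $(\lambda,0)$ with $\lambda\in C^*$ and $\sum_i|\lambda_i|=1$ gives $\lambda\cdot[f(x)-f(\bar x)]<0$ for all such $\lambda$, hence for all $\lambda\in C^*\setminus\{0\}$; using the standard description ${\rm int}(C)=\{z\mid\lambda\cdot z>0\ \text{for all}\ \lambda\in C^*\setminus\{0\}\}$ of the interior of a solid closed convex cone, this yields $f(\bar x)-f(x)\in{\rm int}(C)$, that is, $f(x)\in f(\bar x)-{\rm int}(C)$. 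But then $x\in S\cap N$ with $f(x)\in f(\bar x)-{\rm int}(C)$, contradicting the choice of $N$. Hence $F(x)\ge 0=F(\bar x)$ throughout $N$.

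The proof is essentially bookkeeping; the only substantive ingredients are two classical facts of finite-dimensional convex analysis, namely the bipolar theorem $K^{**}=K$ for closed convex cones and the polar description of the interior of a solid closed convex cone. Accordingly the only points that need care are invoking these correctly and observing that pointedness of $C$ makes $C^*$ nontrivial, so that $\Lambda$ and the normalizations used above are legitimate; there is no real obstacle beyond that.
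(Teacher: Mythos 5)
Your proof is correct. Note that the paper itself gives no argument for this lemma --- it is imported verbatim from the cited reference \cite{na2015} --- so there is no in-paper proof to compare against; your argument is the natural one and is essentially what that reference does. The two ingredients you flag (the bipolar theorem $K^{**}=K$ to recover feasibility of $x$ from $\mu\cdot g(x)\le 0$ for all $\mu\in K^*$, and the dual description of ${\rm int}(C)$, which is exactly the paper's Lemma~\ref{lema2} combined with $C^{**}=C$) are used correctly, and the normalization in $\Lambda$ together with pointedness of $C$ does guarantee the pairs $(\lambda_0,0)$ and the rescalings you invoke are legitimate.
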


\begin{lemma}[\cite{na2015}]\label{NA}
Suppose that $\bar x\in S$ is a weak local minimizer. Then
\[
\lhad 1 F(\bar x;u)\ge 0\quad\textrm{for all}\quad u\in\R^s. 
%\quad\highlhad 2 F(\bar x;0;u)\ge 0\quad\textrm{ за всяко }u\in\E.
\]
\end{lemma}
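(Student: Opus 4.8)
The plan is to deduce the statement directly from Lemma~\ref{na.F} together with the definition of the lower Hadamard directional derivative, so the argument will be short. First I would invoke Lemma~\ref{na.F} to fix a neighborhood $N\ni\bar x$ with $F(x)\ge F(\bar x)=0$ for every $x\in N$, and then pick $r>0$ such that the open ball $B(\bar x,r)$ is contained in $N$ (possible since $N$ is a neighborhood of $\bar x$).

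Next I would fix an arbitrary direction $u\in\R^s$ and unwind the definition
\[
\lhad 1 F(\bar x;u)=\liminf_{t\downarrow 0,\,u^\pr\to u}\,t^{-1}\bigl[F(\bar x+tu^\pr)-F(\bar x)\bigr].
\]
It suffices to show that the difference quotient is nonnegative for all pairs $(t,u^\pr)$ sufficiently close to $(0^{+},u)$. To this end I would restrict attention to $u^\pr$ with $\norm{u^\pr-u}\le 1$, hence $\norm{u^\pr}\le\norm u+1$, and to $t$ with $0<t<r/(\norm u+1)$; then $\norm{tu^\pr}<r$, so $\bar x+tu^\pr\in B(\bar x,r)\subset N$.

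Consequently $F(\bar x+tu^\pr)\ge 0=F(\bar x)$ for all such pairs, whence $t^{-1}[F(\bar x+tu^\pr)-F(\bar x)]=t^{-1}F(\bar x+tu^\pr)\ge 0$ (this holds trivially even when $F(\bar x+tu^\pr)=+\infty$). Passing to the $\liminf$ as $t\downarrow 0$ and $u^\pr\to u$ then yields $\lhad 1 F(\bar x;u)\ge 0$, and since $u\in\R^s$ was arbitrary this proves the lemma.

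I do not anticipate a genuine obstacle: the only point needing (minor) care is that the perturbed points $\bar x+tu^\pr$ remain inside the neighborhood $N$ furnished by Lemma~\ref{na.F}, which is guaranteed by openness of $N$ once $t$ is small and $u^\pr$ is bounded; the rest is just the elementary fact that a $\liminf$ of nonnegative quantities is nonnegative. Note in particular that no finiteness or continuity of $F$ near $\bar x$ is required beyond the value $F(\bar x)=0$ already supplied by Lemma~\ref{na.F}.
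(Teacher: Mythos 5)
Your proof is correct: the paper itself gives no proof of this lemma (it is quoted from \cite{na2015}), and your argument is the natural one, deducing nonnegativity of the difference quotients for all small $t$ and all $u^\pr$ near $u$ from the local inequality $F(x)\ge F(\bar x)=0$ of Lemma~\ref{na.F} and then passing to the $\liminf$. The only detail worth checking --- that $\bar x+tu^\pr$ stays in the neighborhood $N$ when $t$ is small and $u^\pr$ is bounded --- is handled properly, so there is nothing to add.
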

%\begin{proof}
%Let the direction $u\in\R^s$ be fixed. For every sufficiently small positive number $t$ there exists an index $i_0$ such that $f_{i_0}(\bar x+tu)>f_{i_0}(\bar x)$ or an index $j_0$ such that $g_{j_0}(\bar x+tu)>0$.
%\end{proof}

The followng lemma is well known \cite{ggt2004}. 

\begin{lemma}[\cite{jahn}]\label{lema2}
Let $C\subset\R^n$ be a closed convex cone and $x\in C$. Then $x\in {\rm int}(C)$ if and only if $\lambda\cdot x>0$ for all $\lambda\in C^*$ with 
$\lambda\ne 0$.
\end{lemma}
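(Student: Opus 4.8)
The plan is to establish the two implications separately by elementary convex analysis: a perturbation-inside-a-ball argument for the forward implication, and a separation (supporting hyperplane) argument, run contrapositively, for the converse.

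For the forward implication, suppose $x\in{\rm int}(C)$ and fix $\lambda\in C^*$ with $\lambda\ne 0$. Since ${\rm int}(C)$ is open, there is $\varepsilon>0$ with $B(x,\varepsilon)\subseteq C$, so in particular the point $y:=x-(\varepsilon/2)\,\lambda/\norm{\lambda}$ belongs to $C$. Applying the defining inequality of the polar cone $C^*$ to $y$ gives $0\le\lambda\cdot y=\lambda\cdot x-(\varepsilon/2)\norm{\lambda}$, hence $\lambda\cdot x\ge(\varepsilon/2)\norm{\lambda}>0$, as required.

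For the converse I would argue the contrapositive: assume $x\in C\setminus{\rm int}(C)$ and produce a nonzero $\bar\lambda\in C^*$ with $\bar\lambda\cdot x\le 0$. Since $C$ is closed, $x$ is a boundary point of $C$, so the supporting hyperplane theorem yields a nonzero $\lambda_0$ with $\lambda_0\cdot y\le\lambda_0\cdot x$ for all $y\in C$. Now use that $C$ is a cone with vertex at the origin: taking $y=0$ gives $\lambda_0\cdot x\ge 0$, while taking $y=tz$ for fixed $z\in C$ and letting $t\to+\infty$ forces $\lambda_0\cdot z\le 0$ for every $z\in C$. Hence $\bar\lambda:=-\lambda_0\in C^*$, $\bar\lambda\ne 0$, and $\bar\lambda\cdot x=-\lambda_0\cdot x\le 0$, so the condition ``$\lambda\cdot x>0$ for all nonzero $\lambda\in C^*$'' fails.

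The only genuinely delicate point is the converse direction, with two sub-steps to watch: invoking the supporting hyperplane theorem at the boundary point $x$ (valid for any convex set), and then upgrading the one-sided support inequality to the inclusion $-\lambda_0\in C^*$ by exploiting the homogeneity of the cone. An alternative, separation-free route for the converse runs directly: if $\lambda\cdot x>0$ for every nonzero $\lambda\in C^*$, then since $\{\lambda\in C^*:\norm{\lambda}=1\}$ is compact the number $\delta:=\inf\{\lambda\cdot x:\lambda\in C^*,\ \norm{\lambda}=1\}$ is strictly positive, so $\lambda\cdot x\ge\delta\norm{\lambda}$ for all $\lambda\in C^*$; combining this with the bipolar identity $C=C^{**}$ for closed convex cones, for any $v$ with $\norm{v}\le\delta$ one gets $\lambda\cdot(x+v)\ge\norm{\lambda}(\delta-\norm{v})\ge 0$ for all $\lambda\in C^*$, hence $x+v\in C$ and therefore $x\in{\rm int}(C)$. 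The degenerate cases $C=\R^n$ (then $C^*=\{0\}$ and both sides hold trivially) and ${\rm int}(C)=\emptyset$ (then $C^*$ contains a line, so the right-hand condition cannot hold for any $x\in C$) are immediate in either argument.
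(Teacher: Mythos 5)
Your proof is correct. Note that the paper does not actually prove this lemma --- it is stated as a known result quoted from Jahn's book (and attributed also to Giorgi, Guerraggio and Thierfelder), so there is no in-paper argument to compare against. Your forward implication (perturb $x$ by $-(\varepsilon/2)\lambda/\norm{\lambda}$ inside a ball contained in $C$) is the standard one, and your contrapositive for the converse correctly upgrades the supporting hyperplane at the boundary point to a nonzero element of $C^*$ via the homogeneity of the cone; the observation that in fact $\lambda_0\cdot x=0$ (take $y=x$ and $y=0$) makes the failure of the strict inequality immediate. Two small remarks: closedness of $C$ is not what puts $x$ on the boundary (that follows from $x\in C\setminus{\rm int}(C)$ alone); where closedness genuinely enters is your alternative route, through the bipolar identity $C=C^{**}$. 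That second, separation-free argument is arguably the cleaner one here, since it also delivers the quantitative fact $B(x,\delta)\subseteq C$ with $\delta=\min\{\lambda\cdot x:\lambda\in C^*,\ \norm{\lambda}=1\}$, and your handling of the degenerate cases $C^*=\{0\}$ and ${\rm int}(C)=\emptyset$ is right.
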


\begin{Kakutani}
Let $X$ be a nonempty convex and compact set in a Banach space. Suppose that $F:X\to 2^X$  be a multi-valued map. For every $x\in X$ the set $F(x)$ is nonempty convex and let $F$ be closed. Then $F$ has a fixed point, that is there exists $x_0\in X$ with $x_0\in F(x_0)$.
\end{Kakutani}

Let us consider the general problem (P), where the cones $C$ and $K$ do not necessarily coincide with the positive orthants of the respective spaces.

\begin{theorem}\label{th2}
Let $\bar x$ be a weak local minimizer, $f$ and $g$ be Fr\'echet  differentiable  at $\bar x$ vector-valued functions. 
Then, there exist vectors $\lambda\in C^*$ and $\mu\in K^*$ such that  $(\lambda,\mu)\ne (0,0)$, $\mu\cdot g(\bar x)=0$ and
\[
\lambda\nabla f(\bar x)+\mu\nabla g(\bar x)=0.
\]
If the cone $K$ has nonempty interior and $\bar x$ belongs to its interior, then there exist multipliers with $\mu=0$.
\end{theorem}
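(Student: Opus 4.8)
\medskip\noindent\emph{Sketch of the intended argument.}
The engine of the proof is the auxiliary function $F$ together with Lemmas~\ref{na.F} and~\ref{NA}: at the weak local minimizer $\bar x$ one has $F(\bar x)=0$, $F\ge 0$ on a neighbourhood of $\bar x$, and $\lhad 1 F(\bar x;u)\ge 0$ for every $u\in\R^s$. The plan is, first, to use the Fr\'echet differentiability of $f$ and $g$ to convert this last inequality into a statement about $\nabla f(\bar x)$ and $\nabla g(\bar x)$ --- producing, for each direction $u$, a normalised multiplier pair that ``works for $u$'' --- and, second, to collapse this direction-dependent family into a single pair by a fixed-point (equivalently, separation) argument.

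\medskip
\emph{Step 1.} Fix $u\in\R^s$ and sequences $t_k\downarrow 0$, $u_k\to u$ realising $\lhad 1 F(\bar x;u)$. For each $k$, compactness of $\Lambda$ lets us pick $(\lambda_k,\mu_k)\in\Lambda$ attaining the supremum that defines $F(\bar x+t_ku_k)$; pass to a subsequence with $(\lambda_k,\mu_k)\to(\bar\lambda,\bar\mu)\in\Lambda$. Writing $\mu_k\cdot g(\bar x+t_ku_k)=\mu_k\cdot g(\bar x)+\mu_k\cdot[g(\bar x+t_ku_k)-g(\bar x)]$ and discarding the term $t_k^{-1}\,\mu_k\cdot g(\bar x)$, which is $\le 0$ because $\mu_k\in K^*$ and $g(\bar x)\in -K$, gives
\[
t_k^{-1}\big[F(\bar x+t_ku_k)-F(\bar x)\big]\ \le\ \lambda_k\cdot\frac{f(\bar x+t_ku_k)-f(\bar x)}{t_k}+\mu_k\cdot\frac{g(\bar x+t_ku_k)-g(\bar x)}{t_k}.
\]
By Fr\'echet differentiability the right-hand side converges to $\big(\bar\lambda\nabla f(\bar x)+\bar\mu\nabla g(\bar x)\big)\cdot u$, so with Lemma~\ref{NA} we get $\big(\bar\lambda\nabla f(\bar x)+\bar\mu\nabla g(\bar x)\big)\cdot u\ge 0$. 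Solving the maximiser identity for $\mu_k\cdot g(\bar x)$ and using $F\ge 0$ near $\bar x$ shows $t_k^{-1}\,\mu_k\cdot g(\bar x)$ is bounded, hence $\mu_k\cdot g(\bar x)\to 0$ and $\bar\mu\cdot g(\bar x)=0$. Conclusion: for every $u$ there is an $\ell^1$-normalised pair $(\bar\lambda,\bar\mu)\in C^*\times K^*$ with $\bar\mu\cdot g(\bar x)=0$ and $\big(\bar\lambda\nabla f(\bar x)+\bar\mu\nabla g(\bar x)\big)\cdot u\ge 0$.

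\medskip
\emph{Step 2.} Set $\Gamma_0:=\{(\lambda,\mu)\in C^*\times K^*\mid\mu\cdot g(\bar x)=0\}$, a closed convex cone, and $L(\lambda,\mu):=\lambda\nabla f(\bar x)+\mu\nabla g(\bar x)\in\R^s$. Step~1 says that there is no $u$ with $\langle L(z),u\rangle<0$ for every normalised $z\in\Gamma_0$. I would view $(z,u)\mapsto\langle L(z),u\rangle$ as a bilinear payoff on (a compact convex base of $\Gamma_0$) $\times$ (the closed unit ball of $\R^s$) and apply the Kakutani fixed point theorem to the product of the two best-response correspondences, obtaining a saddle point $(z^*,u^*)$; then $\min_u\max_z\langle L(z),u\rangle=\langle L(z^*),u^*\rangle=\max_z\min_u\langle L(z),u\rangle=\max_z\big(-\norm{L(z)}\big)$. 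The left-hand minimax is $\ge 0$ by Step~1, so $\norm{L(z^*)}=0$; thus $\lambda^*\nabla f(\bar x)+\mu^*\nabla g(\bar x)=0$, while $z^*=(\lambda^*,\mu^*)$ is a base point, so $(\lambda^*,\mu^*)\ne(0,0)$ and $\mu^*\cdot g(\bar x)=0$. (Equivalently, by separation: $\sup_z\langle L(z),u\rangle$ is the support function of $\overline{\mathrm{conv}}\,L(\mathrm{base})$, which is $\ge 0$ in every direction, so that convex compact set contains $0$, and a Carath\'eodory representation exhibits a nonzero element of $\Gamma_0\cap\ker L$.) For the final assertion, if $\mathrm{int}(K)\ne\emptyset$ and $g(\bar x)\in-\mathrm{int}(K)$, Lemma~\ref{lema2} gives $\mu\cdot(-g(\bar x))>0$ for every $\mu\in K^*\setminus\{0\}$, so $\mu^*\cdot g(\bar x)=0$ forces $\mu^*=0$.

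\medskip
The crux is Step~2 --- turning the infinitely many direction-dependent pairs into one. The Kakutani (equivalently minimax, equivalently separation) step is short, but executing it cleanly needs a compact \emph{convex} base of $\Gamma_0$, hence $\Gamma_0$ pointed; when $K$ has empty interior $K^*$ need not be pointed (e.g.\ equality constraints, $K=\{0\}$), and then one must first factor out the directions annihilated by the equality part of the constraint and check that the extracted member of $\Gamma_0\cap\ker L$ is genuinely nonzero. A secondary, milder point is justifying the joint passage to the limit in Step~1 ($t_k\downarrow 0$ simultaneously with $u_k\to u$ and $(\lambda_k,\mu_k)$ converging), which is exactly what Fr\'echet --- rather than merely G\^ateaux --- differentiability provides.
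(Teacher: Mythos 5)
Your proposal follows essentially the same route as the paper: the same auxiliary function $F$ with Lemmas~\ref{na.F} and~\ref{NA}, the same per-direction extraction of a normalised pair $(\lambda,\mu)\in\Lambda$ with $[\lambda\cdot\nabla f(\bar x)+\mu\cdot\nabla g(\bar x)]u\ge 0$ via compactness of $\Lambda$ and Fr\'echet differentiability, and a Kakutani-based collapse to a single pair. The only divergence is the packaging of the fixed-point step: the paper applies Kakutani directly to the composite map $u\mapsto B(A(u))$ on a ball of directions, so that a fixed point $u_0=-\lambda\cdot\nabla f(\bar x)-\mu\cdot\nabla g(\bar x)$ with $[\lambda\cdot\nabla f(\bar x)+\mu\cdot\nabla g(\bar x)]u_0\ge 0$ forces $\norm{u_0}^2\le 0$, hence $u_0=0$; you instead route Kakutani through a minimax/saddle-point argument on (base of $\Gamma_0$)$\times$(unit ball) --- the two are interchangeable. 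Your Step~1 derivation of $\mu\cdot g(\bar x)=0$ from the boundedness of $t_k^{-1}\mu_k\cdot g(\bar x)$ is more explicit than the paper's one-line appeal to the definition of $F$, and your use of Lemma~\ref{lema2} for the final assertion is the intended reading. The pointedness caveat you raise at the end is genuine but is shared by the paper's own argument rather than being a defect of your route: the $\ell^1$-normalised set $\Lambda$ need not be convex (so the asserted convexity of $A(u)$ is not automatic without replacing the normalisation by a linear one), and when $K^*$ fails to be pointed the fixed-point step can in principle return the trivial multiplier; neither your sketch nor the paper's proof resolves that case.
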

\begin{proof}
Let $u\in\R^s$ be a fixed direction. We prove that there exist $\lambda\in C^*$ and $\mu\in K^*$ such that $(\lambda,\mu)\in\Lambda$ and
\be\label{5}
[\lambda\cdot\nabla f(\bar x)+\mu\cdot\nabla g(\bar x)]u\ge 0.
\ee
We obtain from Lemma \ref{NA} taking into account the equality $F(\bar x)=0$ that
\[
\lhad 1 F(\bar x;u)=\liminf_{t\downarrow 0,u^\pr\to u}\,\max_{(\lambda,\mu)\in\Lambda} [\lambda\cdot\frac{f(\bar x+t u^\pr)-f(\bar x)}{t}+\mu\cdot\frac{g(\bar x+t u^\pr)}{t}]\ge 0.
\] 
We have $\mu\cdot g(\bar x)\le 0$ for all $\mu\in K^*$. Then, it follows from here that
\[
\liminf_{t\downarrow 0,u^\pr\to u}\,\max_{(\lambda,\mu)\in\Lambda} [\lambda\cdot\frac{f(\bar x+t u^\pr)-f(\bar x)}{t}+\mu\cdot\frac{g(\bar x+t u^\pr)-g(\bar x)}{t}]\ge 0.
\] 
On the other hand the lower Dini directional derivative is greater or equal to the lower Hadamard directional derivative. Therefore
\[
\liminf_{t\downarrow 0}\,\max_{(\lambda,\mu)\in\Lambda} [\lambda\cdot\frac{f(\bar x+t u)-f(\bar x)}{t}+\mu\cdot\frac{g(\bar x+t u)-g(\bar x)}{t}]\ge 0.
\]
Suppose that the lower limit is approached over the sequence $t_k$. Therefore,
\[
\lim_{k\to +\infty}\,\max_{(\lambda,\mu)\in\Lambda} [\lambda\cdot\frac{f(\bar x+t_k u)-f(\bar x)}{t_k}+\mu\cdot\frac{g(\bar x+t_k u)-g(\bar x)}{t_k}]\ge 0.
\]
Suppose that the above maximum is attained over $\lambda_k$ and $\mu_k$. Then
\be\label{3}
\lim_{k\to +\infty}\, [\lambda_k\frac{f(\bar x+t_k u)-f(\bar x)}{t_k}+\mu_k\frac{g(\bar x+t_k u)-g(\bar x)}{t_k}]\ge 0.
\ee
Using that the set $\Lambda$ is compact, then we can suppose that $\lambda_k\to\lambda$, $\mu_k\to\mu$. The functions  $f$ and $g$ are  Fr\'echet differentiable. Then, taking the limits when $k$ approaches $+\infty$ we obtain that 
\be\label{12}
[\lambda\cdot\nabla f(\bar x)+\mu\cdot\nabla g(\bar x)]u\ge 0.
\ee
It follows from here that for every $u\in\R^s$ there exists $(\lambda,\mu)$ such that the inequality (\ref{12}) is satisfied.
Let us consider the multivalued map $A(u)$, which associates any direction $u$ with the pair $(\lambda,\mu)$ satisfying inequality (\ref{12}), that is
 $(\lambda,\mu)\in A(u)$. Consider also the map 
\[
B(\lambda,\mu)=-\sum_{i=1}^n \lambda_i\nabla f_i(\bar x)-\sum_{j=1}^m\mu_j\nabla g_j(\bar x)
%-\sum_{p=1}^q\nu_p\nabla h_p(\bar x).
\]
The map $A(u)$ is nonempty, closed and convex-valued for every direction $u$. $B$ is linear. Then, by Kakutani fixed point theorem the composite map $B\cdot A$ has a fixed point. Therefore, there exists a direction $u_0$ such that $u_0\in B\cdot A(u_0)$.
%, that is $u_0\in B\cdot A(u_0)$.
%Thus, the theorem follows from Kakutani's theorem applying the arguments of Theorem \ref{th1}.
Then,
%there exist $(\lambda,\mu,\nu)$ such that $u_0\in L_2\cdot A(-u_0)$ and there exist $(\lambda,\mu\nu)$ such that 
\[
[\sum_{i=1}^n \lambda_i\nabla f_i(\bar x)+\sum_{j=1}^m \mu_j\nabla g_j(\bar x)](u_0)\ge 0
%+\sum_{p=1}^q\nu_p\nabla h_p(\bar x)](u_0)\ge 0,
\]
where 
\[
u_0=-\sum_{i=1}^n \lambda_i\nabla f_i(\bar x)-\sum_{j=1}^m \mu_j\nabla g_j(\bar x)
%-\sum_{p=1}^q\nu_p\nabla h_p(\bar x).
\]
It follows from here that $\norm{u_0}^2\le 0$. Therefore $u_0=0$, that is
\[
\sum_{i=1}^n \lambda_i\nabla f_i(\bar x)+\sum_{j=1}^m\mu_j\nabla g_j(\bar x)=0.
%+\sum_{p=1}^q\nu_p\nabla h_p(\bar x)=0,
\]
By the definition of the set $\Lambda$ we have $(\lambda,\mu)\ne (0,0)$. It follows from the definition of the function $F$ that $\mu\cdot g(\bar x)=0$.
We conclude from the definition of the derivative $\lhad 1 F(\bar x;u)$ that if  ${\rm int}(K)\ne\emptyset$ and $\bar x$ belongs to the interior of $K$, then there exist multipliers with $\mu=0$.
\end{proof}

%\begin{remark}
%It follows from the theorem that the multiplier $\mu$ can be different from zero only if $\bar x\in -K\setminus -int(K)$.
%\end{remark}

\begin{Alaoglu}[\cite{rud91}, Section 3.15]
Let $\E$ be a normed vector space, and let $\E^*$ be its dual. Then the closed unit ball $B$ of $\E^*$, that is 
\[
B=\{\xi\in\E^*\mid\norm{\xi}\le 1\},
\]
is compact in the weak-$*$ topology.
\end{Alaoglu}

Consider the so called sequence space. It is usually denoted by $l_\infty$ and consists of all infinite bounded sequences ${x_n}$ endowed  with the norm 
\[\norm{x}_\infty=\sup_n |x_n|\].

\begin{theorem}
Let $C$ and $K$ be nonempty convex cones in the infinite-dimensional sequence space $l_\infty$ .   Suppose that $\bar x$ is a weak local minimizer, $f$ and $g$ are continuously 
Fr\' echet  differentiable vector-valued functions at $\bar x$.  Then, there exist vectors $\lambda\in C^*$ and $\mu\in K^*$ such that  $(\lambda,\mu)\ne 0$ and
\[
\sum_{i=1}^\infty\lambda_i\nabla f_i(\bar x)+\sum_{j=1}^\infty\mu_j\nabla g_j(\bar x)=0,\; \mu\cdot g(\bar x)=0.
\]
\end{theorem}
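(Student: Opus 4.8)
The plan is to mimic the proof of Theorem~\ref{th2}, the one substantive change being that the compactness of the normalized multiplier set --- automatic in $\R^n\times\R^m$ --- is now supplied by the Banach-Alaoglu theorem. I shall assume ${\rm int}(C)\ne\emptyset$ and ${\rm int}(K)\ne\emptyset$ (the first is already implicit in the notion of weak local minimizer), fix $\psi\in{\rm int}(C)$ and $\phi\in{\rm int}(K)$, and take the multipliers $(\lambda,\mu)$ in the dual of $l_\infty$, the symbol $\sum_i\lambda_i\nabla f_i(\bar x)$ being shorthand for the action of $\lambda$ on the coordinate sequences of $\nabla f(\bar x)$. Replace the set $\Lambda$ by
\[
\Lambda:=\{(\lambda,\mu)\mid\lambda\in C^*,\ \mu\in K^*,\ \scalpr\lambda\psi+\scalpr\mu\phi=1\}.
\]
Since $C^*,K^*$ are polars they are weak-$*$ closed, and by Lemma~\ref{lema2} one has $\scalpr\lambda\psi\ge r\norm\lambda$ and $\scalpr\mu\phi\ge r\norm\mu$ on these cones for some $r>0$, so the affine slice in the definition of $\Lambda$ is norm-bounded; hence, by Banach-Alaoglu, $\Lambda$ is weak-$*$ compact and convex, and each of its elements is nonzero. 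The function $F(x):=\sup\{\scalpr\lambda{f(x)-f(\bar x)}+\scalpr\mu{g(x)}\mid(\lambda,\mu)\in\Lambda\}$ is finite, and the supremum is attained since $(\lambda,\mu)\mapsto\scalpr\lambda{f(x)-f(\bar x)}+\scalpr\mu{g(x)}$ is weak-$*$ continuous. Lemmas~\ref{na.F} and \ref{NA} carry over unchanged, so $\lhad 1 F(\bar x;u)\ge 0$ for every $u$ in the (finite-dimensional) domain space.

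Next, fix a direction $u$ and repeat the inequality chain of Theorem~\ref{th2}: using $\scalpr\mu{g(\bar x)}\le 0$ pass from $g(\bar x+tu^\pr)$ to $g(\bar x+tu^\pr)-g(\bar x)$ and from the Hadamard to the Dini derivative, then extract $t_k\downarrow 0$ and maximizers $(\lambda_k,\mu_k)\in\Lambda$ with
\[
\lim_{k\to+\infty}\Bigl[\scalpr{\lambda_k}{\tfrac{f(\bar x+t_ku)-f(\bar x)}{t_k}}+\scalpr{\mu_k}{\tfrac{g(\bar x+t_ku)-g(\bar x)}{t_k}}\Bigr]\ge 0.
\]
By continuous Fr\'echet differentiability the two difference quotients converge in $l_\infty$-norm to $\nabla f(\bar x)u$ and $\nabla g(\bar x)u$, and Banach-Alaoglu yields a weak-$*$ convergent subnet of $(\lambda_k,\mu_k)$ with limit $(\lambda,\mu)\in\Lambda$. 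Since a norm-bounded weak-$*$ convergent net paired against a norm-convergent sequence whose limit lies in $l_\infty$ has convergent pairings, passing to the limit produces, for each $u$, a pair $(\lambda,\mu)\in\Lambda$ with $\scalpr\lambda{\nabla f(\bar x)u}+\scalpr\mu{\nabla g(\bar x)u}\ge 0$.

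One finishes as in Theorem~\ref{th2}. The linear map $B(\lambda,\mu)=-\sum_i\lambda_i\nabla f_i(\bar x)-\sum_j\mu_j\nabla g_j(\bar x)$ takes values in the finite-dimensional domain space and is weak-$*$ continuous, so together with the weak-$*$ compactness of $\Lambda$ the composite $B\circ A$ is a closed, convex-valued multimap of a compact convex subset of that space into itself; Kakutani's theorem then yields $(\lambda,\mu)\in\Lambda$ and $u_0=B(\lambda,\mu)$ with $[\sum_i\lambda_i\nabla f_i(\bar x)+\sum_j\mu_j\nabla g_j(\bar x)]u_0\ge 0$, whence $\norm{u_0}^2\le 0$, $u_0=0$, and $\sum_i\lambda_i\nabla f_i(\bar x)+\sum_j\mu_j\nabla g_j(\bar x)=0$; the relations $(\lambda,\mu)\ne 0$ and $\scalpr\mu{g(\bar x)}=0$ follow from the definitions of $\Lambda$ and $F$, as before. (Kakutani may be bypassed: the image of $\Lambda$ under that linear map is a compact convex set meeting every closed half-space through the origin, hence containing it.) The step I expect to be the genuine obstacle --- where the finite-dimensional argument does not transfer for free --- is keeping the limit multiplier nonzero: the norm on the dual of $l_\infty$ is only weak-$*$ lower semicontinuous, so the customary normalization $\norm\lambda+\norm\mu=1$ cuts out a set that is not weak-$*$ closed and along which mass may escape to infinity. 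The interior-point normalization above removes this difficulty, but at the cost of the hypothesis ${\rm int}(C)\ne\emptyset\ne{\rm int}(K)$; lacking some such assumption the statement seems to need amending.
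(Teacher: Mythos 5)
Your proposal is correct in substance, but it follows a genuinely different --- and considerably more careful --- route than the paper, whose entire proof of this theorem is the single sentence that it ``can be proved applying the arguments of Theorem~\ref{th2}.'' You have put your finger on exactly the point where that claim breaks down: the normalization $\sum_i|\lambda_i|+\sum_j|\mu_j|=1$ used in Theorem~\ref{th2} does not transfer to $(l_\infty)^*$, both because a general element of that dual is a finitely additive measure rather than a summable sequence, and because the dual-norm unit sphere is only weak-$*$ lower semicontinuously defined, so the multiplier extracted by Banach--Alaoglu can vanish in the limit. Your replacement normalization $\scalpr{\lambda}{\psi}+\scalpr{\mu}{\phi}=1$ with $\psi\in{\rm int}(C)$, $\phi\in{\rm int}(K)$ is the standard remedy: the estimate $\scalpr{\lambda}{\psi}\ge r\norm{\lambda}$ on $C^*$ makes the slice norm-bounded and weak-$*$ closed, hence weak-$*$ compact, and every element of it is nonzero; the remaining steps (the pairing of a bounded weak-$*$ convergent net against a norm-convergent difference quotient, and the separation shortcut that bypasses Kakutani) then go through as in the finite-dimensional case. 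The price is the extra hypothesis ${\rm int}(K)\ne\emptyset$ (nonemptiness of ${\rm int}(C)$ being implicit in the definition of weak local minimizer), which the theorem as stated does not impose; you are right that without some such assumption the statement appears to need amending. In short, the paper offers no actual proof here, the appeal to Theorem~\ref{th2} has a real gap at the compactness step, and your proposal both identifies that gap and repairs it at the cost of a strengthened hypothesis.
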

\begin{proof}
The theorem can be proved applying the arguments of Theorem \ref{th2}.
\end{proof}

\section{Second-order necessary 
%optimality 
conditions of Fritz John type} 
%\begin{definition}
%Let the feasible point $\bar x$ be  a weak local minimizer, the functions $f$ and $g$ - Fr\'echet differentiable at $\bar x$. A direction $u\in\R^s$ is called critical, iff $\nabla f(\bar x)u\le 0$, $\nabla g(\bar x)u\le 0$.
%\end{definition}

In this section, we obtain second-order necessary conditions for weak local minimum of Fritz John type.

\begin{definition}[\cite{RocWet}]
Lower subdifferential of Hadamard type for the function $f:\E\to\R\cup\{+\infty\}$ at the point $x\in{\dom}\, f$ is defined as follows:
\[
\lsubd 1  f(x)=\{ x^*\in L^1(\E)\mid x^*(u)\le\lhad 1 f (x;u)\quad\textrm{for arbitrary direction}\quad u\in\E\}.
\]
\end{definition}

%We introduce the following definitions:

\begin{definition}[\cite{RocWet}]\label{na.df-2had}
Let $f:\E\to\R\cup\{+\infty\}$ be arbitrary proper real function. Suppose that $x^*_1$ is a fixed point from the lower Hadamard subdifferential $\lsubd 1 f (x)$ at the point $x\in{\dom}\, f$. Then the second-order lower derivative of Hadamard type for the function $f$ at the point $x\in{\dom}\, f$ in direction $u\in\E$ can be defined as follows:
\[
\lhad 2  f(x;x^*_1;u)=\liminf_{t\downarrow 0,u^\pr\to u}\,
2t^{-2}[f(x+t u^\pr)-f(x)-tx^*_1(u^\pr)].
\]
\end{definition}

\begin{definition}[\cite{RocWet}]\label{na.df-2hadsubd}
Let $f:\E\to\R\cup\{+\infty\}$ be an arbitrary proper real function. Suppose that $x\in{\dom}\, f$, $x^*_1\in\lsubd 1 f(x)$. The second-order lower subdifferential of Hadamard type of the function $f:\E\to\R\cup\{+\infty\}$ at the point
$x\in{\dom}\, f$ is defined by the following equality:
\[
\lsubd 2 f(x;x^*_1)=\{ x^*\in L^2(\E)\mid x^*(u)(u)\le\lhad 2  f ( x;x^*_1;u),\;\forall \; u\in\E\}.
\]
\end{definition}

\begin{definition}\label{critical}
Let the feasible point $\bar x$ be  a weak local minimizer, the functions $f$ and $g$ be Fr\'echet differentiable at $\bar x$.
A direction $u\in\R^s$ is callled critical at the point $\bar x$ for the problem (P) iff
\[
\nabla f(\bar x)u\in -C,\quad\nabla g(\bar x)u\in -K.
\]
\end{definition}

%The set of all critical directions is a cone and it is called the critical cone.

\begin{theorem}[\cite{na2015}]\label{na.th1}
Let the point $\bar x\in{\dom}\, f $ be a weak local minimizer of the vector problem {\rm (P)}. Then
\begin{equation}\label{na.15}
\lhad 1 F(\bar x;u)\ge 0,\quad\lhad 2  F(\bar x;0;u)\ge 0,\quad\textrm{ for all }u\in\E.
\end{equation}
\end{theorem}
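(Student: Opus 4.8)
The plan is to dispatch the two inequalities one at a time, in each case plugging Lemma~\ref{na.F} into the appropriate definition. The first, $\lhad 1 F(\bar x;u)\ge 0$ for all $u\in\E$, is precisely Lemma~\ref{NA} and needs no further argument; it appears in the statement only because it is what guarantees that the second-order object below is well defined.

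For the second inequality I would first observe that Lemma~\ref{NA} says exactly that the zero functional $x^*_1=0\in L^1(\E)$ satisfies $x^*_1(u)=0\le\lhad 1 F(\bar x;u)$ for every $u\in\E$, so $0\in\lsubd 1 F(\bar x)$ and $\lhad 2 F(\bar x;0;u)$ is meaningful in the sense of Definition~\ref{na.df-2had}. Substituting $x^*_1=0$ into that definition kills the linear correction term $tx^*_1(u^\pr)$, leaving
\[
\lhad 2 F(\bar x;0;u)=\liminf_{t\downarrow 0,\,u^\pr\to u}\,2t^{-2}\bigl[F(\bar x+tu^\pr)-F(\bar x)\bigr].
\]
Now Lemma~\ref{na.F} supplies a neighborhood $N\ni\bar x$ with $F(\bar x)=0$ and $F(x)\ge 0$ on $N$. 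In the liminf the directions $u^\pr$ may be taken in a bounded neighborhood of $u$, so $\norm{tu^\pr}\to 0$ as $t\downarrow 0$ and hence $\bar x+tu^\pr\in N$ for all sufficiently small $t$; for such $t$ one has $F(\bar x+tu^\pr)-F(\bar x)=F(\bar x+tu^\pr)\ge 0$, so the expression under the liminf is eventually nonnegative, and therefore $\lhad 2 F(\bar x;0;u)\ge 0$.

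The argument is essentially bookkeeping with the definitions, so I do not expect a genuine obstacle. The only point that calls for a moment's care is the assertion that $\bar x+tu^\pr$ eventually re-enters $N$; this is why one keeps $u^\pr$ in a bounded set while letting $t\downarrow 0$, which is legitimate since $u^\pr\to u$ forces $u^\pr$ to be bounded.
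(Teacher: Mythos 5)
Your argument is correct. Note that the paper itself offers no proof of this theorem --- it is imported verbatim from \cite{na2015} --- so there is no in-text proof to compare against; your derivation (Lemma \ref{NA} gives the first inequality and shows $0\in\lsubd 1 F(\bar x)$, after which the second inequality falls out of Lemma \ref{na.F} because $F\ge 0=F(\bar x)$ on a neighborhood that $\bar x+tu^\pr$ eventually enters) is the natural one and handles the only delicate point, the boundedness of $u^\pr$ near $u$, correctly.
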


\begin{theorem}\label{th3}
Let the point $\bar x$ be a weak local minimizer of the problem {\rm (P)}, where the functions $f$ and $g$ be Fr\'echet differentiable in a neighborhood of $\bar x$ and twice Fr\'echet differentiable at $\bar x$. Then
there exist $\lambda\in C^*$ and $\mu\in K^*$ such that $(\lambda,\mu)\ne (0,0)$ and
\be\label{16}
\lambda\cdot\nabla f(\bar x)+\mu\cdot\nabla g(\bar x)=0,\; \mu\cdot g(\bar x)=0.
\ee
\be\label{14}
[\lambda\cdot\nabla^2 f(\bar x)+\mu\cdot\nabla^2 g(\bar x)](u,u)\ge 0\quad\textrm{for every critical direction}\quad u\in\E
\ee
\end{theorem}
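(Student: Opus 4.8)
The plan is to run the scheme of the proof of Theorem~\ref{th2} a second time, expanding $F$ to second order and using Theorem~\ref{na.th1} in place of Lemma~\ref{NA}. By Lemma~\ref{na.F}, $F(\bar x)=0$, so Theorem~\ref{na.th1} gives
\[
\lhad 2 F(\bar x;0;u)=\liminf_{s\downarrow 0,\,u'\to u}\frac{2}{s^2}F(\bar x+su')\ge 0\qquad\text{for every }u\in\R^s .
\]
I would obtain (\ref{16}) and (\ref{14}) together from a single Kakutani step. Fix a critical direction $u$ (Definition~\ref{critical}) once and for all.

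First, for an arbitrary $v\in\R^s$ and an arbitrary scalar $c\ge 0$ I would probe $F$ along the parabolic arc $s\mapsto\bar x+su+cs^2v=\bar x+s(u+csv)$, whose tangent at $s=0$ is $u$; since $u+csv\to u$ this yields $\liminf_{s\downarrow 0}\frac{2}{s^2}F(\bar x+su+cs^2v)\ge\lhad 2 F(\bar x;0;u)\ge 0$. Writing $F$ as a maximum over the compact set $\Lambda$ and expanding $f,g$ to second order at $\bar x$ (both are twice Fr\'echet differentiable there), the expression under the maximum is
\[
\mu\cdot g(\bar x)+s\,[\lambda\cdot\nabla f(\bar x)+\mu\cdot\nabla g(\bar x)]u+cs^2\,[\lambda\cdot\nabla f(\bar x)+\mu\cdot\nabla g(\bar x)]v+\frac{s^2}{2}\,[\lambda\cdot\nabla^2 f(\bar x)+\mu\cdot\nabla^2 g(\bar x)](u,u)+o(s^2),
\]
with $o(s^2)$ uniform over $\Lambda$. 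Taking $s_k\downarrow 0$ along which the lower limit is attained, maximizers $(\lambda_k,\mu_k)\in\Lambda$, and a subsequence with $(\lambda_k,\mu_k)\to(\lambda,\mu)\in\Lambda$, and dividing by $s_k^2/2$, I note that the terms $\mu_k\cdot g(\bar x)\le 0$ (since $g(\bar x)\in-K$, $\mu_k\in K^*$) and $[\lambda_k\cdot\nabla f(\bar x)+\mu_k\cdot\nabla g(\bar x)]u\le 0$ (since $u$ is critical) appear divided by $s_k^2$ and by $s_k$; as the whole quotient is bounded they must be $O(s_k^2)$ and $O(s_k)$. Hence in the limit $\mu\cdot g(\bar x)=0$, $[\lambda\cdot\nabla f(\bar x)+\mu\cdot\nabla g(\bar x)]u=0$, and
\[
2c\,[\lambda\cdot\nabla f(\bar x)+\mu\cdot\nabla g(\bar x)]v+[\lambda\cdot\nabla^2 f(\bar x)+\mu\cdot\nabla^2 g(\bar x)](u,u)\ge 0 .
\]

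Next I would set $\Omega=\{(\lambda,\mu)\in\Lambda:\mu\cdot g(\bar x)=0,\ [\lambda\cdot\nabla f(\bar x)+\mu\cdot\nabla g(\bar x)]u=0\}$ (nonempty, compact, convex) and, on $\Omega$, the linear functionals $\phi(\lambda,\mu)=[\lambda\cdot\nabla f(\bar x)+\mu\cdot\nabla g(\bar x)]v$ and $\psi(\lambda,\mu)=[\lambda\cdot\nabla^2 f(\bar x)+\mu\cdot\nabla^2 g(\bar x)](u,u)$. The previous step says $\max_\Omega(2c\phi+\psi)\ge 0$ for every $c\ge 0$; hence the compact convex planar set $\{(\phi(p),\psi(p)):p\in\Omega\}$ admits no strict separation from the nonnegative quadrant (such a separation would give a $c\ge 0$ with $\max_\Omega(2c\phi+\psi)<0$), so $\phi$ and $\psi$ are simultaneously nonnegative at some point of $\Omega$. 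Consequently
\[
A(v)=\{(\lambda,\mu)\in\Lambda:\mu\cdot g(\bar x)=0,\ [\lambda\cdot\nabla f(\bar x)+\mu\cdot\nabla g(\bar x)]v\ge 0,\ [\lambda\cdot\nabla^2 f(\bar x)+\mu\cdot\nabla^2 g(\bar x)](u,u)\ge 0\}
\]
is nonempty for every $v\in\R^s$, convex and closed, and $A$ is a closed multivalued map. With the linear map $B(\lambda,\mu)=-\sum_{i=1}^n\lambda_i\nabla f_i(\bar x)-\sum_{j=1}^m\mu_j\nabla g_j(\bar x)$, the set $B(\Lambda)$ is compact and convex and $B\circ A:B(\Lambda)\to 2^{B(\Lambda)}$, so Kakutani's theorem produces $v_0\in B(A(v_0))$: some $(\lambda,\mu)\in A(v_0)$ has $\lambda\cdot\nabla f(\bar x)+\mu\cdot\nabla g(\bar x)=-v_0$, and then the first-order inequality defining $A(v_0)$ reads $-\norm{v_0}^2\ge 0$, so $v_0=0$. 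Thus $\lambda\cdot\nabla f(\bar x)+\mu\cdot\nabla g(\bar x)=0$, $\mu\cdot g(\bar x)=0$, $(\lambda,\mu)\in\Lambda$ (so $(\lambda,\mu)\ne(0,0)$), which is (\ref{16}); and the second-order inequality defining $A(v_0)$ is (\ref{14}) at $u$. Since $u$ was an arbitrary critical direction, this would complete the proof.

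The hard part will be the decoupling in the third paragraph: passing from the single mixed inequality $2c\phi+\psi\ge 0$ (valid for all $c\ge 0$) to the simultaneous nonnegativity of the first-order form $\phi$ and the second-order form $\psi$ on $\Omega$---it is exactly this that lets the Kakutani device of Theorem~\ref{th2} carry the Hessian term into the conclusion. The remaining ingredients---uniformity of the Taylor remainder over the compact $\Lambda$, the asymptotic negligibility of the lower-order terms forced by the criticality of $u$ and the feasibility of $\bar x$, and the closedness and convexity of $A$---are routine.
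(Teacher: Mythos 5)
Your argument is essentially sound, but it takes a genuinely different route from the paper's, and the difference is worth recording. The paper evaluates the second-order Hadamard derivative of $F$ only along the straight ray $u'=u$: for a critical direction $u$ it extracts limit multipliers satisfying $\lambda\cdot\nabla f(\bar x)u=0$, $\mu\cdot\nabla g(\bar x)u=0$ (its (\ref{15})) together with (\ref{14}) at $u$, and then runs the Kakutani device over the set $D$ of critical directions, with $A(u)$ defined by exactly those conditions. You instead fix the critical direction $u$ once, exploit the $u'\to u$ freedom in Definition \ref{na.df-2had} by probing along the parabolas $\bar x+su+cs^2v$, and then decouple the resulting mixed inequality $2c\phi+\psi\ge 0$ (valid for all $c\ge 0$) into simultaneous nonnegativity of the first-order form $\phi$ and the second-order form $\psi$ at a single point of $\Omega$, before applying Kakutani over the auxiliary direction $v$. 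What this buys is substantial: for each critical $u$ your final multiplier satisfies the full stationarity condition (\ref{16}) \emph{and} (\ref{14}) at that $u$. In the paper's arrangement the fixed point $u_0$ is forced to equal $0$ by (\ref{15}), so the multiplier it ends with is only known to satisfy (\ref{14}) at $u_0=0$, which is vacuous; your scheme repairs precisely that defect. Neither argument, however, produces one pair $(\lambda,\mu)$ for which (\ref{14}) holds at \emph{every} critical direction simultaneously, which is what a literal reading of the theorem demands; your closing sentence (``since $u$ was an arbitrary critical direction'') quietly passes from ``for each $u$ there exist multipliers'' to the stated universal form, and you should say explicitly that the multipliers depend on $u$.

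Two smaller points to tighten. In the decoupling step, strict separation of the (assumed convex) image of $\Omega$ from the nonnegative quadrant yields a functional $\alpha\phi+\beta\psi$ with $\alpha\ge0$, $\beta\ge0$, $(\alpha,\beta)\ne(0,0)$, negative on $\Omega$; when $\beta>0$ this rescales to your form $2c\phi+\psi$, but when $\beta=0$ you get only $\sup_\Omega\phi<0$ and must let $c\to+\infty$, using boundedness of $\psi$ on the compact $\Omega$, to contradict $\max_\Omega(2c\phi+\psi)\ge0$ --- write that case out. Second, the convexity of $A(v)$ and of the image of $\Omega$ rests on the convexity of $\Lambda$, i.e.\ of the $\ell^1$-normalized slice of $C^*\times K^*$; this is clear for orthant cones but is taken for granted for general cones here, exactly as it is in the paper's own proofs of Theorems \ref{th2} and \ref{th3}. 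The remaining steps --- uniformity of the Taylor remainder over the bounded set $\Lambda$, the forcing of $\mu\cdot g(\bar x)=0$ and $[\lambda\cdot\nabla f(\bar x)+\mu\cdot\nabla g(\bar x)]u=0$ from the boundedness of the nonpositive lower-order terms, and the fixed-point computation giving $v_0=0$ --- are carried out correctly and mirror the paper's first-order argument.
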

\begin{proof}
 It follows from Theorem \ref{na.th1} that
\[
\lhad 1 F(\bar x;u)\ge 0,\quad\lhad 2  F(\bar x;0;u)\ge 0\quad\textrm{ for all }u\in\E.
\]
Let $u\in\E$ be an arbitrary direction. Therefore
\[
\liminf_{t\downarrow 0,u^\pr\to u}\,\max_{(\lambda,\mu)\in\Lambda} [\lambda\cdot\frac{f(\bar x+t u^\pr)-f(\bar x)}{t}+\mu\cdot\frac{g(\bar x+t u^\pr)-g(\bar x)}{t}]\ge 0.
\] 
We can write from the last inequality that
\[
\liminf_{t\downarrow 0}\,\frac{1}{t}\max_{(\lambda,\mu)\in\Lambda}\{ [\lambda\cdot [f(\bar x+t u)-f(\bar x)]+\mu\cdot [{g(\bar x+t u)-g(\bar x)}]\}\ge 0.
\] 
Suppose that the lower limit is approached over the sequence $t_k$. Therefore,
\[
\lim_{k\to +\infty}\,\frac{1}{t_k}\max_{(\lambda,\mu)\in\Lambda} \lambda\cdot [f(\bar x+t_k u)-f(\bar x)]+\mu\cdot [g(\bar x+t_k u)-g(\bar x)]\ge 0.
\]
Suppose that the above maximum is attained over $\lambda_k$ and $\mu_k$. Then
\[
\lim_{k\to +\infty}\, [\lambda_k\frac{f(\bar x+t_k u)-f(\bar x)}{t_k}+\mu_k\frac{g(\bar x+t_k u)-g(\bar x)}{t_k}]\ge 0.
\]
Using that the set $\Lambda$ is compact, then we can suppose that $\lambda_k\to\lambda$, $\mu_k\to\mu$. The functions  $f$ and $g$ are  Fr\'echet differentiable. Then, taking the limits when $k$ approaches $+\infty$ we obtain that 
\be\label{13}
[\lambda\cdot\nabla f(\bar x)+\mu\cdot\nabla g(\bar x)]u\ge 0.
\ee
It follows from here that for every $u\in\R^s$ there exists $(\lambda,\mu)$ such that the inequality (\ref{13}) is satisfied.
Let us suppose additionally that the direction  $u\in\R^s$ is critical. Therefore
\[
\nabla f(\bar x)u\in -C,\quad\nabla g(\bar x)u\in -K.
\]
\[
\lambda_1\nabla f(\bar x)u\le 0,\quad\mu_1\nabla g(\bar x)u\le 0\quad\textrm{ for all }\quad\lambda_1\in C^*, \mu_1\in K^*.
\]
By (\ref{13}) we conclude that 
\be\label{15}
\lambda\nabla f(\bar x)u=0,\quad \mu\nabla g(\bar x)u=0.
\ee
By Theorem  \ref{na.th1} we have 
\[
0\le\lhad 2  F(\bar x;0;u)\le\liminf_{t\downarrow 0}\,\frac{1}{t^2}\max_{(\lambda,\mu)\in\Lambda}\{ [\lambda\cdot [f(\bar x+t u)-f(\bar x)]+\mu\cdot [{g(\bar x+t u)-g(\bar x)}]\}\le
\]
\[
\le\lim_{k\to +\infty}\,\frac{1}{t_k^2}\max_{(\lambda,\mu)\in\Lambda} \lambda\cdot [f(\bar x+t_k u)-f(\bar x)]+\mu\cdot [g(\bar x+t_k u)-g(\bar x)].
\]
The above maximum is attained over the same sequences $\lambda_k$ and $\mu_k$. Then
\be\label{k}
\lim_{k\to +\infty}\,\frac{1}{t_k^2} \{\lambda_k [f(\bar x+t_k u)-f(\bar x)]+\mu_k [g(\bar x+t_k u)-g(\bar x)]\}\ge 0.
\ee
Let us suppose additionally that $f$ and $g$ are twice differentiable at $\bar x$.
By Taylor expansion theorem 
\[
f(\bar x+t_k u)-f(\bar x)=t_k\nabla f(\bar x)u+(1/2)t_k^2\nabla^2 f(\bar x)(u,u)+o(t_k^2),
\]
\[
g(\bar x+t_k u)-g(\bar x)=t_k\nabla g(\bar x)u+(1/2)t_k^2\nabla^2 g(\bar x)(u,u)+o(t_k^2).
\]
It follows from $\lambda_k\in C^*$, $\mu_k\in K^*$, $\nabla f(\bar x)u\in -C$, $\nabla g(\bar x)u\in -K$ that
$\lambda_k\cdot\nabla f(\bar x)u\le 0$, $\mu_k\cdot\nabla g(\bar x)u\le 0$. Therefore,
\[
\lambda_k\cdot [f(\bar x+t_k u)-f(\bar x)]\le (1/2)t_k^2\lambda_k\cdot\nabla^2 f(\bar x)(u,u)+o(t_k^2),
\]
\[
\mu_k\cdot [g(\bar x+t_k u)-g(\bar x)]\le (1/2)t_k^2\mu_k\cdot\nabla^2 g(\bar x)(u,u)+o(t_k^2).
\]
Taking into account that $\lambda_k$ approaches $\lambda$, $\mu_k$ approaches $\mu$ when $k$ tends to $\infty$ we conclude from (\ref{k}) that 
inequality (\ref{14}) holds.
%and $\lambda\nabla f(\bar x)u=0$, $\mu\nabla g(\bar x)u=0$, 
%\be\label{14}
%[\lambda\cdot\nabla^2 f(\bar x)+\mu\cdot\nabla^2 g(\bar x)](u,u)\ge 0
%\ee
%for every critical direction $u\in\R^s$.

Denote by $D$ the set of all critical at $\bar x$ directions.
Let us consider the multivalued map $A(u)$, which associates any critical direction $u$ with the pair $(\lambda,\mu)$ satisfying equations (\ref{15}) and inequality (\ref{14}), that is  $(\lambda,\mu)\in A(u)$. Consider also the map 
\[
B(\lambda,\mu)=\{u\in D\,|\, u=-\lambda\cdot\nabla f(\bar x)-\mu\cdot\nabla g(\bar x)\}.
\]
The map $A(u)$ is nonempty, closed and convex-valued for every direction $u$. $B$ is linear. Then, by Kakutani fixed point theorem the composite map $B\cdot A$ has a fixed point. Therefore, there exists a direction $u_0$ such that $u_0\in B\cdot A(u_0)$. We prove as in the proof of Theorem \ref{th2} that
$u_0=0$. By the definition of the set $\Lambda$ we have $(\lambda,\mu)\ne (0,0)$. It follows from the definition of the function $F$ that $\mu\cdot g(\bar x)=0$. Therefore equations (\ref{16}) and inequality (\ref{14}) hold.

\end{proof}

\section{Sufficient conditions for a weakly globally efficient points}
In this section, we derive sufficient conditions for a weak global minimum in the problem (P).

Pseudoconvex scalar functions were introduced in optimization by Mangasarian \cite{man65}. We generalize them and apply in cone constrained vector optimization.
Sufficient conditions for global optimality in scalar problems with inequality constraints and pseudoconvex objective function were obtained initially in \cite{man65}. Global optimality conditions with quasiconcave objective function were derived before in \cite{arrent61}. In several articles were proved later more results containing sufficient conditions for global optimality.

\begin{definition}
A Fr\'echet differentiable vector function $f : X\to\R^n$, $X\subset\R^s$ is called pseudoconvex with respect to the cone $C$ iff the following implication is satisfied
\[
f(x)\in f(\bar x) - {\rm int(C)}, x\in X, \bar x\in X\quad\textrm{imply}\quad\nabla f(\bar x)(x-\bar x)\in - {\rm int(C)}.
\]
\end{definition}

\begin{definition}
A Fr\'echet differentiable scalar function $h : X\to\R$, $X\subset\R^s$ is called strictly pseudoconvex iff the following implication is satisfied
\[
h(x)\le h(\bar x), x\in X, \bar x\in X\quad\textrm{imply}\quad\nabla h(\bar x)(x-\bar x)<0.
\]
\end{definition}

%The following lemma is an important property of polar cones.

%\begin{lemma}
%Let $C$ be a cone with nonempty interior. Then, 
%\[
%x\in {\rm int} (C)\quad\textrm{if and only if}\quad \lambda\cdot x>0\quad\textrm{for all}\quad\lambda\in C^*, \lambda\ne 0.
%\]
%\end{lemma}

\begin{theorem}{(First-order conditions)}
Let $\f$ and $\g$ be 
%locally Lipschitz  vector-valued functions and
 Fr\'echet  differentiable vector-valued functions,
 %at $\bar x\in S$,
  $f$ be pseudoconvex with respect to the cone
 $C$. Suppose that first-order Fritz John conditions are satisfied at $\bar x$, that is there exists a pair 
$(\lambda,\mu)\in C^*\times K^*$  such that  $(\lambda,\mu)\ne (0,0)$ and
\be\label{foc}
\lambda\nabla f(\bar x)+\mu\nabla g(\bar x)=0.
\ee
Suppose also that $\mu\cdot g(\bar x)=0$ and the scalar function $\mu\cdot g(x)$ is strictly pseudoconvex, if $\mu\ne 0$. Then, $\bar x$ is a  weakly efficient global solution.
\end{theorem}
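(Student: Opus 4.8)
The plan is to proceed by contradiction, exploiting that under pseudoconvexity the infinitesimal information carried by the Fritz John equation already detects every feasible point at which $f$ strictly decreases. Assume $\bar x$ is \emph{not} a weakly efficient global solution. Then there is a feasible $x\in S$ (so $g(x)\in -K$) with $f(x)\in f(\bar x)-{\rm int}(C)$. Since $f$ is pseudoconvex with respect to $C$, this forces $\nabla f(\bar x)(x-\bar x)\in -{\rm int}(C)$; in particular $-\nabla f(\bar x)(x-\bar x)$ lies in $C$ (because ${\rm int}(C)\subset C$), which is exactly the membership hypothesis needed to apply Lemma \ref{lema2}.

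Next I would establish two sign estimates for the linearized data along the direction $x-\bar x$. First, since $\lambda\in C^*$ and $-\nabla f(\bar x)(x-\bar x)\in C$, we get $[\lambda\nabla f(\bar x)](x-\bar x)\le 0$, and if moreover $\lambda\ne 0$ then Lemma \ref{lema2} upgrades this to the strict inequality $[\lambda\nabla f(\bar x)](x-\bar x)<0$. Second, if $\mu\ne 0$, then feasibility together with $\mu\in K^*$ gives $\mu\cdot g(x)\le 0=\mu\cdot g(\bar x)$, so the scalar function $x\mapsto\mu\cdot g(x)$, being strictly pseudoconvex, yields $[\mu\nabla g(\bar x)](x-\bar x)<0$; and if $\mu=0$ this quantity is trivially $0$. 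Hence in all cases $[\lambda\nabla f(\bar x)](x-\bar x)\le 0$ and $[\mu\nabla g(\bar x)](x-\bar x)\le 0$, and since $(\lambda,\mu)\ne(0,0)$ at least one of these two inequalities is strict.

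Finally I would add the two estimates to obtain $[\lambda\nabla f(\bar x)+\mu\nabla g(\bar x)](x-\bar x)<0$. But the first-order Fritz John condition (\ref{foc}) says $\lambda\nabla f(\bar x)+\mu\nabla g(\bar x)=0$, so the left-hand side is $0$, a contradiction. Therefore no such feasible $x$ exists and $\bar x$ is a weakly efficient global solution. I do not anticipate a serious obstacle; the only points requiring care are the bookkeeping of which multiplier vanishes (handled by the fact that $(\lambda,\mu)\ne(0,0)$ forces at least one strict inequality) and verifying that Lemma \ref{lema2} is genuinely applicable, i.e. that the relevant vector belongs to the closed convex cone $C$, which follows from ${\rm int}(C)\subset C$.
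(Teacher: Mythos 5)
Your proposal is correct and follows essentially the same route as the paper: contradiction via a feasible $x$ with $f(x)\in f(\bar x)-{\rm int}(C)$, pseudoconvexity to get $\nabla f(\bar x)(x-\bar x)\in -{\rm int}(C)$ hence $\lambda\cdot\nabla f(\bar x)(x-\bar x)<0$ when $\lambda\ne 0$, strict pseudoconvexity of $\mu\cdot g$ to get $\mu\cdot\nabla g(\bar x)(x-\bar x)<0$ when $\mu\ne 0$, and a sign clash with (\ref{foc}). Your explicit appeal to Lemma \ref{lema2} and the case bookkeeping for which multiplier is nonzero only make explicit what the paper leaves implicit.
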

\begin{proof}
Assume in the contrary that $\bar x$ is not a global weakly efficient solution. Therefore there exists  another feasible point $x\in S$ with the property
 $f(x)\in f(\bar x)-{\rm int}(C)$. It follows from pseudoconvexity of $f$ that $\nabla f(\bar x)(x-\bar x)\in - {\rm int(C)}$. Therefore,
 $\lambda\cdot\nabla f(\bar x)(x-\bar x)<0$ if $\lambda\ne 0$. By $g(x)\in -K$ we obtain that
\[
\mu\cdot g(x)\le 0=\mu\cdot g(\bar x).
\]
By strict pseudoconvexity we conclude that $\mu\cdot\nabla g(\bar x)(x-\bar x)<0$ if $\mu\ne 0$. Thus, we obtained a contradiction to condition (\ref{foc}).
\end{proof}

Second-order pseudoconvex scalar functions were introduced in \cite{jmaa2008}. We generalize them and apply in cone constrained vector optimization.

\begin{definition}
A twice Fr\'echet differentiable vector function $f : X\to\R^n$, $X\subset\R^s$ is called second-order pseudoconvex with respect to the cone $C$ iff the following implications are satisfied
\[
f(x)\in f(\bar x) - {\rm int(C)}, x\in X, \bar x\in X\quad\textrm{imply}\quad\nabla f(\bar x)(x-\bar x)\in - C;
\]
\[
\begin{array}{c}
f(x)\in f(\bar x) - {\rm int(C)}, x\in X, \bar x\in X,\\
 \nabla f(\bar x)(x-\bar x)\in - (C\setminus\textrm{int}(C)) \quad\emph{}\textrm{imply}\quad (x-\bar x)\nabla^2 f(\bar x)(x-\bar x)\in - \textrm{int}(C).
\end{array}
\]
\end{definition}

\begin{definition}
A twice Fr\'echet differentiable scalar function $h : X\to\R$, $X\subset\R^s$ is called second order strictly pseudoconvex iff the following implications are satisfied
\[
h(x)\le h(\bar x), x\in X, \bar x\in X\quad\textrm{imply}\quad\nabla h(\bar x)(x-\bar x)\le 0;
\]
\[
h(x)\le h(\bar x), x\in X, \bar x\in X,\; \nabla h(\bar x)(x-\bar x) = 0\quad\textrm{imply}\quad (x-\bar x)\nabla^2 h(\bar x)(x-\bar x) < 0;
\]
\end{definition}

%\begin{definition}
%Let $x, \;\bar x$ be feasible points. Then a direction $x-\bar x$ is called critical at the point $\bar x\in S$ for $\bar x$ to be weakly efficient point iff the following two conditions are satisfied:
%\[
%\nabla f(\bar x)(x-\bar x)\in - C.
%\]
%and there exists $\epsilon>0$ such that $g(\bar x+t(x-\bar x))\in -K$ for all $t\in (0,\epsilon)$
%\end{definition}

\begin{theorem}{(Second-order conditions)}
Let $\f$ and $\g$ be and twice Fr\'echet  differentiable  vector-valued functions at the feasible point $\bar x\in S$,  $f$ be second order pseudoconvex with respect to the cone $C$. Suppose that second order Fritz John conditions are satisfied at $\bar x$, that is there exists a pair 
$(\lambda,\mu)\in C^*\times K^*$  such that  $(\lambda,\mu)\ne (0,0)$ and
\be\label{soc1}
\lambda\cdot\nabla f(\bar x)+\mu\cdot\nabla g(\bar x)=0,
\ee
and
\begin{eqnarray}\label{soc2}
 \lambda\cdot (x-\bar x)\nabla^2 f(\bar x)(x-\bar x)+\mu\cdot (x-\bar x)\nabla^2 g(\bar x)(x-\bar x)\ge 0
\end{eqnarray}
 for every direction  $x-\bar x$ 
such that  

\noindent
\begin{equation*}
\begin{array}{c}
x\in S,\; f(x)\in f(\bar x)-{\rm int}(C), \\
\nabla f(\bar x)(x-\bar x)\in - (C\setminus {\rm int}(C)),\;\nabla g(\bar x)(x-\bar x)\in - (K\setminus {\rm int}(K)).
\end{array}
\end{equation*}

\noindent
Suppose additionally that $\mu\cdot g(\bar x)=0$ and the scalar function $\mu\cdot g(x)$ is second order strictly pseudoconvex if $\mu\ne 0$.
%the set of all feasible points $S$ is convex, 
%for every $x\in S$  $g[\bar x+t(x-\bar x)]\in g(\bar x) - K$. 
Then, $\bar x$ is a  weakly efficient global solution.
\end{theorem}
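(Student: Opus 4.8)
The plan is to argue by contradiction, following the same pattern as in the first-order sufficiency result. Suppose $\bar x$ is not a weakly efficient global solution; then there is a feasible point $x\in S$ with $f(x)\in f(\bar x)-{\rm int}(C)$. The strategy is: (i) extract first-order information from the generalized convexity hypotheses; (ii) use the multiplier identity (\ref{soc1}) to force the first-order terms to vanish; (iii) invoke Lemma~\ref{lema2} to relocate the relevant gradients onto the boundaries of $C$ and $K$, so that $x-\bar x$ becomes a direction of the kind occurring in (\ref{soc2}); and (iv) apply the second implications of second-order (strict) pseudoconvexity to obtain a strictly negative value of the quadratic form in (\ref{soc2}), a contradiction.

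For steps (i)--(ii): the first implication of second-order pseudoconvexity of $f$ gives $\nabla f(\bar x)(x-\bar x)\in-C$, hence $\lambda\cdot\nabla f(\bar x)(x-\bar x)\le 0$. Since $x\in S$ we have $g(x)\in-K$, so $\mu\cdot g(x)\le 0=\mu\cdot g(\bar x)$, and therefore $\mu\cdot\nabla g(\bar x)(x-\bar x)\le 0$ (by the first implication of second-order strict pseudoconvexity of $\mu\cdot g$ when $\mu\ne 0$, and trivially when $\mu=0$). Adding these two inequalities and comparing with (\ref{soc1}) applied to the vector $x-\bar x$ forces
\[
\lambda\cdot\nabla f(\bar x)(x-\bar x)=0,\qquad
\mu\cdot\nabla g(\bar x)(x-\bar x)=0 .
\]

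For step (iii), Lemma~\ref{lema2} turns these orthogonality relations into boundary memberships: if $\lambda\ne 0$ then $\lambda\cdot\nabla f(\bar x)(x-\bar x)=0$ with $\lambda\in C^*\setminus\{0\}$ rules out $-\nabla f(\bar x)(x-\bar x)\in{\rm int}(C)$, so $\nabla f(\bar x)(x-\bar x)\in-(C\setminus{\rm int}(C))$, and similarly, if $\mu\ne 0$ then $\nabla g(\bar x)(x-\bar x)\in-(K\setminus{\rm int}(K))$; thus $x-\bar x$ is an admissible direction for (\ref{soc2}). For step (iv), the second implication of second-order pseudoconvexity of $f$ now yields $(x-\bar x)\nabla^2 f(\bar x)(x-\bar x)\in-{\rm int}(C)$, and Lemma~\ref{lema2} once more gives $\lambda\cdot(x-\bar x)\nabla^2 f(\bar x)(x-\bar x)<0$ when $\lambda\ne 0$; likewise the second implication of second-order strict pseudoconvexity of $\mu\cdot g$ gives $\mu\cdot(x-\bar x)\nabla^2 g(\bar x)(x-\bar x)<0$ when $\mu\ne 0$. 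Both summands are thus non-positive and, since $(\lambda,\mu)\ne(0,0)$, at least one is strictly negative, so
\[
\lambda\cdot(x-\bar x)\nabla^2 f(\bar x)(x-\bar x)
+\mu\cdot(x-\bar x)\nabla^2 g(\bar x)(x-\bar x)<0 ,
\]
contradicting (\ref{soc2}).

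I expect the main obstacle to be the careful handling of the degenerate subcases $\lambda=0$ and $\mu=0$: in each case one must check that $x-\bar x$ still qualifies as a direction to which (\ref{soc2}) applies, and that the appropriate notion --- second-order pseudoconvexity of the vector function $f$ as against second-order strict pseudoconvexity of the scalar function $\mu\cdot g$ --- is the one being invoked. Lemma~\ref{lema2} is precisely the tool that converts the first-order orthogonality relations into the boundary conditions $-(C\setminus{\rm int}(C))$ and $-(K\setminus{\rm int}(K))$ that (\ref{soc2}) requires. Beyond that, the proof repeats the alternating scheme already used in the first-order theorem, ``a gradient term is $\le 0$, hence $=0$ by the multiplier identity, hence the argument lies on the cone boundary, hence the corresponding Hessian term is strictly negative,'' now run one order higher.
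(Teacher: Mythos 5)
Your proof follows essentially the same route as the paper's: the same chain in which the first implications of second-order (strict) pseudoconvexity give nonpositive first-order terms, the multiplier identity (\ref{soc1}) forces them to vanish, the vanishing products place the gradients on the cone boundaries, and the second implications then yield a strictly negative quadratic form contradicting (\ref{soc2}). You are in fact slightly more explicit than the paper, which invokes neither Lemma~\ref{lema2} nor the question of whether $x-\bar x$ is an admissible direction for (\ref{soc2}) in the degenerate cases $\lambda=0$ or $\mu=0$ --- a point you rightly flag as the delicate step but which the paper's own proof passes over in silence.
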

\begin{proof}
Assume in the contrary that $\bar x$ is not a weakly efficient global solution. Therefore there exists  another feasible point $x\in S$ with the property
 $f(x)\in f(\bar x)-{\rm int}(C)$. It follows from the second order pseudoconvexity of $f$ that $\nabla f(\bar x)(x-\bar x)\in - C$. Therefore, by $\lambda\in C^*$ we obtain that
 $\lambda\cdot\nabla f(\bar x)(x-\bar x)\le 0$. By $g(x)\in -K$ we obtain that
\[
\mu\cdot g(x)\le 0=\mu\cdot g(\bar x).
\]
By second order strict pseudoconvexity we conclude that $\mu\cdot\nabla g(\bar x)(x-\bar x)\le 0$ if $\mu\ne 0$.
It follows from equation (\ref{soc1}) by $\lambda\cdot\nabla f(\bar x)(x-\bar x)\le 0$ and $\mu\cdot\nabla g(\bar x)(x-\bar x)\le 0$ that
$\lambda\cdot\nabla f(\bar x)(x-\bar x) = 0$ and $\mu\cdot\nabla g(\bar x)(x-\bar x) = 0$.
 
%The direction $x-\bar x$ is critical, because $\nabla f(\bar x)(x-\bar x)\in - C$ and  $\nabla g(\bar x)(x-\bar x)\in - K$.
%there exists $\epsilon>0$ such that $g(\bar x+t(x-\bar x))\in -K$ for all $t\in (0,\epsilon)$.

If $\lambda\ne 0$, then $\nabla f(\bar x)(x-\bar x)\in -(C\setminus\textrm{int}(C))$. By the second order pseudoconvexity of $f$ we obtain that
$(x-\bar x)\nabla^2 f(\bar x)(x-\bar x)\in -\textrm{int}(C)$. Therefore, $\lambda\cdot (x-\bar x)\nabla^2 f(\bar x)(x-\bar x)<0$.

If $\mu\ne 0$, then by $\mu\cdot g(x)\le\mu\cdot g(\bar x)$, second order strict pseudoconvexity and $\mu\cdot\nabla g(\bar x)(x-\bar x) = 0$ we conclude that $\mu\cdot (x-\bar x)\nabla^2 g(\bar x)(x-\bar x)<0$. Thus $(\lambda,\mu)\ne (0,0)$ contradicts the inequality 
(\ref{soc2}).
\end{proof}

\section{Sufficient conditions for a weakly locally efficient points}
In this section, we prove sufficient conditions for a weak isolated local minimum in the problem (P).

The notion isolated local minimum of order 1 and 2 was introduced in \cite{aus84}. Somewhere isolated minimizers of order 1 or 2 are named strict minimizers of order 1 or 2 \cite{JN08}. We generalize these notions to cone constrained vector problems and apply these notions in cone constrained optimization.

\begin{definition}
A point $\bar x\in S$ is called a (first-order) weak isolated local minimizer iff there exist $\lambda\in C^*$, $\lambda\ne 0$ and a number $\epsilon>0$ such that
\be\label{22}
\lambda\cdot f(x)\ge\lambda\cdot f(\bar x)+\epsilon\norm{x-\bar x},\quad\forall x\in N_\epsilon(\bar x)\cap S
\ee
where $N_\epsilon(\bar x)$ is a round neighborhood of $\bar x$ with a radius $\epsilon$.
\end{definition}

\begin{theorem}
Let $\bar x$ be a feasible point. Suppose that there exists $\bar\lambda\in C^*$, $\bar\lambda\ne 0$ and $\bar\mu\in K^*$  with   the  property that the functions $f$, $g$ are locally Lipschitz, $\bar\mu g(\bar x)=0$ and 
%the lower Dini directional derivative of $\lambda\cdot f$ at $\bar x$ is strictly positive in every direction, different from zero, that is
\be\label{23}
\bar\lambda\cdot\ldini f(\bar x,u)+\bar\mu\cdot\ldini g(\bar x,u) > 0\quad\textrm{for every direction}\quad u\in\E,\; u\ne 0.
\ee
Then, $\bar x$ is a (first-order) weak isolated local minimizer.
\end{theorem}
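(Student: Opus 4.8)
I want to show that the point $\bar x$ is a first-order weak isolated local minimizer, i.e.\ I must produce a fixed $\bar\lambda\in C^*$, $\bar\lambda\ne 0$, and an $\epsilon>0$ with $\bar\lambda\cdot f(x)\ge\bar\lambda\cdot f(\bar x)+\epsilon\norm{x-\bar x}$ for all $x\in N_\epsilon(\bar x)\cap S$. The $\bar\lambda$ is already handed to me by hypothesis~(\ref{23}); the work is to extract the quantitative estimate with a uniform $\epsilon$. The natural strategy is a contradiction argument combined with a compactness/homogeneity reduction to the unit sphere of directions.

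First I would argue by contradiction: suppose no such $\epsilon$ works. Then for every $k\in\mathbb N$ there is a feasible point $x_k\in N_{1/k}(\bar x)\cap S$, $x_k\ne\bar x$, with $\bar\lambda\cdot f(x_k) < \bar\lambda\cdot f(\bar x) + (1/k)\norm{x_k-\bar x}$. Write $t_k:=\norm{x_k-\bar x}\to 0$ and $u_k:=(x_k-\bar x)/t_k$, a unit vector; passing to a subsequence, $u_k\to u$ with $\norm u = 1$, so $u\ne 0$. The inequality rearranges to $t_k^{-1}\bigl[\bar\lambda\cdot f(\bar x+t_k u_k)-\bar\lambda\cdot f(\bar x)\bigr] < 1/k$. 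I would also use the constraint: since $g(x_k)\in -K$ and $\bar\mu\in K^*$ with $\bar\mu\cdot g(\bar x)=0$, we get $\bar\mu\cdot g(x_k)\le 0 = \bar\mu\cdot g(\bar x)$, hence $t_k^{-1}\bigl[\bar\mu\cdot g(\bar x+t_k u_k)-\bar\mu\cdot g(\bar x)\bigr]\le 0$. Adding the two gives
\[
t_k^{-1}\bigl[\bar\lambda\cdot f(\bar x+t_k u_k)+\bar\mu\cdot g(\bar x+t_k u_k)-\bar\lambda\cdot f(\bar x)-\bar\mu\cdot g(\bar x)\bigr] < 1/k .
\]
Now the point is to pass to the limit along the difference quotient with the \emph{variable} direction $u_k\to u$ and identify the $\liminf$ with the lower Hadamard (not merely Dini) derivative. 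Because $f$ and $g$ are locally Lipschitz, replacing $u_k$ by the fixed limit $u$ changes the difference quotient by at most $L\norm{u_k-u}\to 0$, so in the limit I obtain
\[
\bar\lambda\cdot\ldini f(\bar x,u)+\bar\mu\cdot\ldini g(\bar x,u)\le 0,
\]
contradicting hypothesis~(\ref{23}) since $u\ne 0$. (Equivalently one may phrase this directly through $\lhad 1$ of the combined function, exactly as in Lemma~\ref{NA} and Theorem~\ref{th2}.)

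The main obstacle is the passage to the limit: one must be careful that the $\liminf$ over the sequence $(t_k,u_k)$ of the scalar function $h(x):=\bar\lambda\cdot f(x)+\bar\mu\cdot g(x)$ is bounded below by $\ldini h(\bar x,u)$ — or, more cleanly, that local Lipschitzness lets us absorb the discrepancy between the moving direction $u_k$ and its limit $u$, so that the estimate reduces to the ordinary one-direction lower Dini derivative in direction $u$. A secondary subtlety is that $\bar\lambda\ne 0$ is used only implicitly here: the definition of weak isolated minimizer requires $\bar\lambda\ne 0$, which is exactly the hypothesis, so no separate verification is needed; likewise the complementarity $\bar\mu g(\bar x)=0$ is used precisely to drop the constraint term's contribution to the difference quotient. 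Once the contradiction is reached, the proof is complete.
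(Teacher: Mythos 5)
Your proof is correct and follows essentially the same route as the paper's: argue by contradiction, normalize $x_k-\bar x$ to get a convergent sequence of unit directions, use local Lipschitzness to replace the moving direction $u_k$ by its limit $u$, and use $g(x_k)\in -K$, $\bar\mu\in K^*$, $\bar\mu\cdot g(\bar x)=0$ to control the constraint term, contradicting (\ref{23}). The only cosmetic difference is that you add the two difference quotients before taking the $\liminf$ (invoking superadditivity of $\liminf$ to split back into the two Dini derivatives), whereas the paper bounds $\bar\lambda\cdot\ldini f(\bar x,u)\le 0$ and $\bar\mu\cdot\ldini g(\bar x,u)\le 0$ separately and then sums.
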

\begin{proof}
%It follows from (\ref{23}) that $(\bar\lambda,\bar\mu)\ne (0,0)$.
Suppose the contrary that $\bar x$ is not a weak isolated local minimizer. Therefore, condition (\ref{22}) fails for every $\lambda\in C^*$, $\lambda\ne 0$ and every $\epsilon>0$. It follows from here that for every sequence of positive numbers $\epsilon_n$ converging to zero there is a sequence 
$x_n\in S\cap N_{\epsilon_n}(\bar x) $ such that
\[
\bar\lambda\cdot f(x_n)-\bar\lambda\cdot f(\bar x)<\epsilon_n\norm{x_n-\bar x}.
\]
Let us denote $t_n=\norm{x_n-\bar x}$, $u_n=(x_n-\bar x)/t_n$. Without loss of generality we can suppose that the sequence $u_n$ is convergent, that is $u_n\to u$, $u\ne 0$ and $t_n\to 0$. Therefore, 
\[
\bar\lambda\cdot f(\bar x+t_n u_n)-\bar\lambda\cdot f(\bar x)<\epsilon_n t_n.
\]
Taking into account that the function $\bar\lambda\cdot f$ is locally Lipschitz, we obtain that
\[
\lim_{t\to 0,u^\pr\to u}\frac{\bar\lambda\cdot f(\bar x+t u^\pr)-\bar\lambda\cdot f(\bar x+t u)}{t} = 0.
\]
Therefore, 
\[
\liminf_{n\to +\infty}\frac{\bar\lambda\cdot f(\bar x+t_n u)-\bar\lambda\cdot f(\bar x)}{t_n}\le 0.
\]
It follows from here that 
\be\label{8}
\bar\lambda\cdot\ldini f(\bar x,u)\le 0.
\ee

It follws from $g(x_n)\in -K$ and   $\bar\mu\in K^*$ that $\bar\mu\, g(x_n)\le 0$. By $\bar\mu\, g(\bar  x)=0$ we obtain
\[
\liminf_{t\to 0}\frac{\bar\mu\cdot g(\bar x+t u)-\bar\mu\cdot g(\bar x)}{t}=\liminf_{t\to 0,u^\pr\to u}\frac{\bar\mu\cdot g(\bar x+t u^\pr)-\bar\mu\cdot g(\bar x)}{t}\le 
\]
\[
\liminf_{n\to\infty}\frac{\bar\mu\cdot g(\bar x+t_n u_n)-\bar\mu\cdot g(\bar x)}{t_n}\le 0.
\]
Therefore
\be\label{9}
\bar\mu\cdot\ldini g(\bar x,u)\le 0
\ee
Inequalities (\ref{8}) and (\ref{9}) contradict to condition (\ref{23}).
\end{proof}

\begin{definition}
A point $\bar x\in S$ is called a second-order weak isolated local minimizer iff there exist $\lambda\in C^*$, $\lambda\ne 0$ and a number $\epsilon>0$ such that
\be\label{21}
\lambda\cdot f(x)\ge\lambda\cdot f(\bar x)+\epsilon\norm{x-\bar x}^2,\quad\forall x\in N_\epsilon(\bar x)\cap S
\ee
where $N_\epsilon(\bar x)$ is a round neighborhood of $\bar x$ with a radius $\epsilon$.
\end{definition}
\begin{theorem}
Let $\bar x$ be a feasible point and the function $f$ be twice continuously differentiable. Suppose that there exists $\bar\lambda\in C^*$, $\bar\lambda\ne 0$ and $\bar\mu\in K^*$  with   the following property:
\be\label{20}
\begin{array}{c}
\bar\lambda\cdot\nabla f(\bar x) + \bar\mu\cdot\nabla g(\bar x) = 0,\\
\bar\lambda\cdot\nabla^2 f(\bar x)(u,u) + \bar\mu\cdot\nabla^2 g(\bar x)(u,u) > 0\quad\textrm{for every critical direction}\quad u\in\E,\; u\ne 0.
\end{array}
\ee
%\[
%\bar\lambda\cdot\nabla^2 f(\bar x)(u,u)\ge 0\quad\textrm{for every critical direction}\quad u\in\E.
%\]
$\bar\mu\cdot g(\bar x) = 0$. Then, $\bar x$ is a second-order weak isolated local minimizer.
\end{theorem}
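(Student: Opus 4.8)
The plan is to argue by contradiction with $\lambda=\bar\lambda$ as the multiplier witnessing second-order isolated minimality. Suppose $\bar x$ were not a second-order weak isolated local minimizer. Since $\bar\lambda\in C^*$ and $\bar\lambda\ne 0$, inequality (\ref{21}) then fails for $\lambda=\bar\lambda$ and every $\epsilon>0$; picking $\epsilon_n\downarrow 0$ we obtain $x_n\in S\cap N_{\epsilon_n}(\bar x)$ with
\[
\bar\lambda\cdot f(x_n)<\bar\lambda\cdot f(\bar x)+\epsilon_n\norm{x_n-\bar x}^2 .
\]
Set $t_n:=\norm{x_n-\bar x}$ and $u_n:=(x_n-\bar x)/t_n$; then $t_n\downarrow 0$ and $\norm{u_n}=1$, so after passing to a subsequence $u_n\to u$ with $\norm{u}=1$, in particular $u\ne 0$.

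Next I would work with the Lagrange-type scalar function $L(x):=\bar\lambda\cdot f(x)+\bar\mu\cdot g(x)$. Feasibility $g(x_n)\in -K$ together with $\bar\mu\in K^*$ gives $\bar\mu\cdot g(x_n)\le 0$, and $\bar\mu\cdot g(\bar x)=0$; hence $L(x_n)\le\bar\lambda\cdot f(x_n)$ and $L(\bar x)=\bar\lambda\cdot f(\bar x)$, so that $L(\bar x+t_nu_n)-L(\bar x)<\epsilon_nt_n^2$. Expanding $L$ by Taylor's formula at $\bar x$ (using the differentiability hypotheses on $f$ and $g$, and $t_nu_n\to 0$, $u_n\to u$), the linear term $t_n[\bar\lambda\cdot\nabla f(\bar x)+\bar\mu\cdot\nabla g(\bar x)]u_n$ vanishes by the first equation in (\ref{20}); dividing the resulting inequality by $t_n^2$ and letting $n\to\infty$ gives
\[
\bar\lambda\cdot\nabla^2 f(\bar x)(u,u)+\bar\mu\cdot\nabla^2 g(\bar x)(u,u)\le 0 .
\]

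To contradict the second line of (\ref{20}) it remains to show that $u$ is a critical direction, and this is the step I expect to be the main obstacle. The first-order Taylor expansion of $\bar\lambda\cdot f$ combined with the displayed strict inequality yields $\bar\lambda\cdot\nabla f(\bar x)u\le 0$, and dividing $\bar\mu\cdot[g(x_n)-g(\bar x)]\le 0$ by $t_n$ and passing to the limit yields $\bar\mu\cdot\nabla g(\bar x)u\le 0$; together with $[\bar\lambda\cdot\nabla f(\bar x)+\bar\mu\cdot\nabla g(\bar x)]u=0$ this forces $\bar\lambda\cdot\nabla f(\bar x)u=0=\bar\mu\cdot\nabla g(\bar x)u$. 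Upgrading these scalar identities to the membership relations $\nabla f(\bar x)u\in -C$ and $\nabla g(\bar x)u\in -K$ of Definition \ref{critical} is the delicate point: one passes to the limit in $t_n^{-1}(g(x_n)-g(\bar x))$, which lies in the tangent cone of $-K$ at $g(\bar x)$ (coinciding with $-K$ when $g(\bar x)=0$), and argues analogously for the objective; alternatively one treats separately the case in which $u$ fails to be critical, where a first-order lower estimate of the form $\bar\lambda\cdot f(x_n)-\bar\lambda\cdot f(\bar x)\ge c\,t_n$ with $c>0$ already contradicts $\bar\lambda\cdot f(x_n)-\bar\lambda\cdot f(\bar x)<\epsilon_nt_n^2$. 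Once $u$ is known to be critical and nonzero, the last displayed inequality contradicts the strict positivity postulated in (\ref{20}), which proves that $\bar x$ is a second-order weak isolated local minimizer with the multiplier $\bar\lambda$.
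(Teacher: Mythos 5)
Your argument follows the paper's proof almost line for line: negate the definition with $\lambda=\bar\lambda$, extract $x_n\in S\cap N_{\epsilon_n}(\bar x)$ with $\bar\lambda\cdot f(x_n)-\bar\lambda\cdot f(\bar x)<\epsilon_n\norm{x_n-\bar x}^2$, normalize to get $t_n\downarrow 0$, $u_n\to u$ with $\norm{u}=1$, add the Taylor expansions of $\bar\lambda\cdot f$ and $\bar\mu\cdot g$ (using $\bar\mu\cdot g(x_n)\le 0=\bar\mu\cdot g(\bar x)$), cancel the linear term via the first equation of (\ref{20}), divide by $t_n^2$ and pass to the limit to get $\bar\lambda\cdot\nabla^2 f(\bar x)(u,u)+\bar\mu\cdot\nabla^2 g(\bar x)(u,u)\le 0$. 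Up to that point your write-up is correct and is exactly the paper's computation.

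The step you single out as "the delicate point" --- showing that the limit direction $u$ is critical in the sense of Definition \ref{critical} --- is indeed where the difficulty sits, and you should know that the paper does not close it either: its proof ends with the bare assertion that the contradiction holds "because the direction $u$ is critical," with no justification. Neither of your two sketched repairs actually works. The tangent-cone argument gives $\nabla g(\bar x)u\in T_{-K}(g(\bar x))$, which equals $-K$ only when $g(\bar x)=0$, and it says nothing at all about the objective side: from $\bar\lambda\cdot f(x_n)-\bar\lambda\cdot f(\bar x)<\epsilon_n t_n^2$ you can only extract the scalar inequality $\bar\lambda\cdot\nabla f(\bar x)u\le 0$, which is strictly weaker than the membership $\nabla f(\bar x)u\in -C$ required by Definition \ref{critical} (a single functional $\bar\lambda\in C^*$ cannot detect membership in $-C$; that requires all of $C^*$ by Lemma \ref{lema2}). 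Your fallback for the non-critical case also fails: if $\nabla f(\bar x)u\notin -C$ there is no reason to have $\bar\lambda\cdot\nabla f(\bar x)u>0$ for the particular $\bar\lambda$ in hand, so no first-order lower estimate $\bar\lambda\cdot f(x_n)-\bar\lambda\cdot f(\bar x)\ge c\,t_n$ is available. This is a genuine gap, not a presentational one: with criticality defined by $\nabla f(\bar x)u\in -C$ and $\nabla g(\bar x)u\in -K$, the critical cone can be much smaller than the set of directions reachable as limits of $(x_n-\bar x)/\norm{x_n-\bar x}$ along sequences violating (\ref{21}) (it can even reduce to $\{0\}$ while such limit directions exist), so the hypothesis (\ref{20}) simply does not cover the direction $u$ produced by the contradiction argument. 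Closing this would require either strengthening the definition of a critical direction to the scalarized one ($\bar\lambda\cdot\nabla f(\bar x)u\le 0$, $\bar\mu\cdot\nabla g(\bar x)u\le 0$, plus the constraint-tangency condition) or adding hypotheses; as written, both your proof and the paper's stop one step short of a valid contradiction.
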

\begin{proof}
%It foolows from the inequality in \ref{20} that  $(\bar\lambda,\bar\mu)\ne (0,0)$.
Suppose the contrary that $\bar x$ is not a second-order weak isolated local minimizer. Therefore, condition (\ref{21}) fail for every $\lambda\in C^*$, $\lambda\ne 0$ and every $\epsilon>0$. It follows from here that for every sequence of positive numbers $\epsilon_n$ converging to zero there is a sequence $x_n\in S\cap N_{\epsilon_n}(\bar x) $ such that
\[
\bar\lambda\cdot f(x_n)-\bar\lambda\cdot f(\bar x)<\epsilon_n\norm{x_n-\bar x}^2.
\]
Let us denote $t_n=\norm{x_n-\bar x}$, $u_n=(x_n-\bar x)/t_n$. Without loss of generality we can suppose that $u_n$ is convergent, that is $u_n\to u$, $u\ne 0$ and $t_n\to 0$. Therefore,
\[
\bar\lambda\cdot f(\bar x+t_n u_n)-\bar\lambda\cdot f(\bar x)<\epsilon_n t_n^2.
\]
By Taylor expansion theorem 
\[
f(\bar x+t_n u_n)-f(\bar x)=t_n\nabla f(\bar x)u_n+(1/2)t_n^2\nabla^2 f(\bar x)(u_n,u_n)+o(t_n^2).
\]
\be\label{30}
\bar\lambda\cdot f(\bar x+t_n u_n)-\bar\lambda\cdot f(\bar x)=t_n\bar\lambda\cdot\nabla f(\bar x)u_n+(1/2)t_n^2\bar\lambda\cdot\nabla^2 f(\bar x)(u_n,u_n)+o(t_n^2).
\ee
\[
g(\bar x+t_n u_n)-g(\bar x)=t_n\nabla g(\bar x)u_n+(1/2)t_n^2\nabla^2 g(\bar x)(u_n,u_n)+o(t_n^2).
\]
\be\label{31}
\bar\mu\cdot g(\bar x+t_n u_n)-\bar\mu g(\bar x) = t_n\bar\mu\cdot \nabla g(\bar x)u_n+(1/2)t_n^2\bar\mu\cdot \nabla^2 g(\bar x)(u_n,u_n)+o(t_n^2).
\ee
On the other hand, we can take into account that $\bar\lambda\cdot f(\bar x+t_n u_n)-\bar\lambda\cdot f(\bar x)<\epsilon_n t_n^2$ and $\bar\mu\cdot g(x_n)\le 0$ by $g(x_n)\in -K$,   $\bar\mu\in K^*$ and $\bar\mu g(\bar x)=0$. Therefore, by adding  (\ref{30}) and (\ref{31}) we obtain
\[
t_n\bar\lambda\cdot\nabla f(\bar x)u_n+t_n\bar\mu\cdot\nabla g(\bar x)u_n+(1/2)t_n^2\bar\lambda\cdot\nabla^2 f(\bar x)(u_n,u_n)+(1/2)t_n^2\bar\mu\cdot\nabla^2 g(\bar x)(u_n,u_n)+o(t_n^2)\le \epsilon_n t_n^2.
\]
We conclude from the equation in (\ref{20}) that
\[
t_n^2\bar\lambda\cdot\nabla^2 f(\bar x)(u_n,u_n)+t_n^2\bar\mu\cdot\nabla^2 g(\bar x)(u_n,u_n)+o(t_n^2)\le 2\epsilon_n t_n^2.
\]
Simplifying this expression and taking the limits when $n\to\infty$ we obtain
\[
\bar\lambda\cdot\nabla^2 f(\bar x)(u,u)+\bar\mu\cdot\nabla^2 g(\bar x)(u,u)\le 0,
\]
which contradicts the inequality in (\ref{20}),  because the direction $u$ is critical.
%\[
%\frac{t_n\bar\lambda\cdot\nabla f(\bar x)u_n+(1/2)t_n^2\bar\lambda\cdot\nabla^2 f(\bar x)(u_n,u_n)+o(t_n^2)}{t_n^2}<\epsilon_n.
%\]
\end{proof}
%\begin{remark}
%It follows from the inequality in (\ref{20}) that $(\bar\lambda,\bar\mu)\ne(0,0)$.
%\end{remark}

\end{document}